\newtheorem{theorem}{Theorem}[section]
\newtheorem{lemma}[theorem]{Lemma}
\newtheorem{remark}[theorem]{Remark}
\newtheorem{coro}[theorem]{Corollary}
\newtheorem{defi}[theorem]{Definition}
\newtheorem{example}[theorem]{Example}
\newcommand{\C}{\mathbb C}
\newcommand{\ii}{\mathfrak i}
\newcommand{\la}{\lambda}
\newcommand{\diag}{{\rm diag}}
\newcommand{\orbstar}{{\mathcal O^\star}}
\newcommand{\bun}{{\mathcal B}}
\newcommand{\bunc}{{\mathcal B^c}}
\newcommand{\bunh}{{\mathcal B^*}}
\newcommand{\bunstar}{{\mathcal B^\star}}
\newcommand{\cbunc}{{\overline{\mathcal B^c}}}
\newcommand{\cbunh}{{\overline{\mathcal B^*}}}
\newcommand{\cbunstar}{{\overline{\mathcal B^\star}}}
\title{The generic canonical form for $^\star$congruence of matrices}
\author[1]{Fernando De Ter\'an\thanks{\tt fteran@math.uc3m.es}}
\author[1] {Froil\'an M. Dopico\thanks{\tt dopico@math.uc3m.es}}
\affil[1]{Universidad Carlos III de Madrid, ROR: https://ror.org/03ths8210, Departamento de Matemáticas, Avda. Universidad 30, 28911, Legan\'es, Spain}
\date{}
\pgfplotsset{compat=1.18}
\begin{document}

\maketitle

\begin{abstract}
    First, we prove that the set of $n\times n$ complex matrices is the closure of a certain open subset whose elements have a very specific canonical form under congruence, which is uniquely determined up to the values of some parameters, but which has a slightly different expression depending on whether $n$ is even or odd. As a consequence, the canonical form under congruence of the elements of this subset can be considered the generic canonical form under congruence of complex $n\times n$ matrices. Second, we prove that the set of $n\times n$ complex matrices is the union of the closures of certain $\lfloor n/2\rfloor+1$ open subsets and that, for each of these subsets, its elements have a very specific canonical form under $^*$congruence, which is uniquely determined up to the values of some parameters. As a consequence, the $\lfloor n/2\rfloor+1$ canonical forms under $^*$congruence of the elements of each of these subsets can be considered  the generic canonical forms under $^*$congruence of complex $n\times n$ matrices. So, there is only one generic canonical form under congruence whereas the number of generic canonical forms under $^*$congruence is  $\lfloor n/2\rfloor+1$ instead, which reveals a strong dichotomy between the relations of congruence and $^*$congruence with respect to generic structures. In other words, we determine in this paper the generic matrix representations of $n\times n$ bilinear and sesquilinear forms in $\mathbb{C}^n \times \mathbb{C}^n$.
\end{abstract}

\noindent{\bf Keywords}: congruence and $^*$congruence of complex matrices; canonical form; orbit; bundle; $\top$-palindromic matrix pencil; $*$-palindromic matrix pencil; dimension; codimension; genericity.

\bigskip

\noindent{\bf AMS subject classification}: 15A18, 15A21, 15A22, 15A24, 15A54.

\section{Introduction}
Two matrices $A,B\in\C^{n\times n}$ are said to be {\em congruent} (respectively, {\em $^*$congruent}) if there is an invertible matrix $P\in\C^{n\times n}$ such that $P^\top AP=B$ (resp., $P^*AP=B$), where $M^\top$ (resp., $M^*$) denotes the transpose (resp., conjugate transpose) of $M$. For simplicity, we will use the notation $\star$ to denote both the transpose ($\top$) and the conjugate transpose ($*$). Both congruence and $^*$congruence are equivalence relations on $\C^{n\times n}$, and this allows us to classify the set of $n\times n$ complex matrices into the corresponding equivalence classes. Each equivalence class is the {\em orbit} under $^\star$congruence of a certain matrix $A\in\C^{n\times n}$, namely:
\begin{equation}\label{orbit}
    \orbstar(A):=\left\{P^\star AP:\ P\in\C^{n\times n}\ \mbox{invertible}\right\}.
\end{equation}
The search for a set of ``canonical" representatives of the equivalence classes has led to the ``canonical forms for congruence and $^*$congruence" (CFC and $^*$CFC, respectively) introduced in \cite[Th. 1.1]{hs06} (later reformulated by means of tridiagonal matrices in Theorems 1.1 and 1.2 in \cite{fhs08}). Following the notation above, we will use $^\star$CFC to denote both the CFC and the $^*$CFC.

The question that motivates the present work is the following: which is the most-likely $^\star$CFC? In order to answer this question we first need to clarify what we mean by ``most likely". The approach we follow here uses the notion of {\em $^\star$congruence bundle}, which is revised in Section \ref{basic_sec} and further studied in Section \ref{sec.techlemmas}. Such a bundle is a union of orbits with a common feature. To be more precise, the $^\star$CFC is a direct sum of blocks of three different types (see Theorem \ref{hs_th}). For $\star=*$, two of these three types of blocks depend on some complex values, namely:
\begin{itemize}
    \item unit complex numbers, $\alpha\in\C$, with $|\alpha|=1$ (Type I blocks), and
    \item complex numbers, $\mu\in\C$, with $|\mu|>1$ (Type II blocks),
\end{itemize}
and, for $\star=\top$ instead, one of the three types of blocks depends on:
\begin{itemize}
    \item complex numbers, $\mu\in\C$, with $0\neq \mu\neq(-1)^{k+1}$ (Type II blocks with size $2k\times 2k$).
\end{itemize}

A $^\star$congruence bundle is the set of matrices with the same $^\star$CFC up to the values of the complex numbers $\alpha$'s and $\mu$'s mentioned above (see Definitions \ref{bundlec_def} and \ref{bundle*_def}), which in addition must satisfy some constraints that guarantee that all the orbits in the same bundle have the same dimension. This definition mimics, though it is more complicated, the one of bundles under similarity of $n\times n$ complex matrices, namely, a bundle under similarity is the set of matrices with the same Jordan canonical form (JCF), up to the values of the eigenvalues, provided that the sets of distinct eigenvalues and the sizes of the corresponding Jordan blocks are the same (see \cite[\S 5.3]{arnold71}). It is well known that the ``most likely" JCF of complex $n\times n$ matrices is $\diag(\la_1,\hdots,\la_n)$, with $\la_i\neq\la_j$ for $i\neq j$, namely, most $n\times n$ complex matrices have $n$ distinct complex eigenvalues.  From a topological point of view, this means, first, that, for every matrix $A\in\C^{n\times n}$, there is a sequence of matrices $\{A_k\}_{k\in\mathbb N}$, with $A_k\in\C^{n\times n}$ for all $k\in\mathbb N$, which converges to $A$, and such that the JCF of $A_k$ is $\diag(\la_1^{(k)},\hdots,\la_n^{(k)})$, with $\la_i^{(k)}\neq\la_j^{(k)}$ for $i\neq j$, and, second, that the set of matrices whose JCF is of the type $\diag(\la_1,\hdots,\la_n)$, with $\la_i\neq\la_j$ for $i\neq j$, is an open subset of $\C^{n\times n}$. In terms of bundles, this means that the bundle under similarity of $\diag(\la_1,\hdots,\la_n)$, with $\la_i\neq\la_j$ for $i\neq j$, is {\em dense and open} in $\C^{n\times n}$, where the closure and the openness in this paper are considered in the standard topology of $\C^{n\times n}$. Accordingly, we say that $\diag(\la_1,\hdots,\la_n)$, with $\la_i\neq\la_j$ for $i\neq j$, is the {\em generic} JCF in $\C^{n\times n}$.

For $^\star$congruence we follow an approach like the one described in the previous paragraph for the similarity relation, though some important differences show up due to the existence of three types of different blocks in the $^\star$CFC, in contrast with the unique type of blocks existing in the JCF. Based on this approach, we will prove that $\C^{n\times n}$ is the closure of only one $^\top$congruence bundle ($^\top$congruence bundles are named just congruence bundles for simplicity), which is an open subset of $\C^{n\times n}$. This bundle is then termed the ``generic" congruence bundle and, accordingly, the CFC of the matrices within this bundle is the ``generic'' CFC of $n\times n$ complex matrices. This result is stated in parts (a) of the Theorems \ref{main_th} and \ref{th.opennness}. Thus, the situation for congruence is like the one for similarity in the sense that there is only one generic CFC. This scenario is in stark contrast with the one for $^*$congruence, since we prove in parts (b) of Theorems \ref{main_th} and \ref{th.opennness} that $\C^{n\times n}$ is the union of the closures of $\lfloor n/2\rfloor+1$ $^*$congruence bundles, and that each of these bundles is an open subset of $\C^{n\times n}$. These $\lfloor n/2\rfloor+1$  bundles are termed the ``generic" $^*$congruence bundles and the $^*$CFCs of the matrices within each of these bundles are the ``generic'' $^*$CFCs of $n\times n$ complex matrices.

In order to get more insight on the striking difference between the number of generic CFC and of generic $^*$CFCs, we explore the connection between an $n\times n$ complex matrix $A$ and the associated $\star$-palindromic pencil $A+\la A^\star$ \cite{MMMM-vibrations}. In particular, the number of generic $^\star$CFCs is related to the possible number of (distinct) $1 \times 1$ Type I blocks in the generic $^\star$CFCs of $n\times n$ complex matrices. These (distinct) Type I blocks correspond to different unit eigenvalues, i.e., eigenvalues of modulus $1$, of the pencil $A+\la A^\star$. When $\star=\top$ and $n$ is even, there are no, generically, unit eigenvalues in a $\top$-palindromic pencil $A+\la A^\top$, and when $n$ is odd there is just one (the eigenvalue $-1$, which is always an eigenvalue of a regular $\top$-palindromic pencil with odd size because $A-A^\top$ is skew-symmetric and $\det(A-A^\top )=0$). However, when $\star=*$, a generic $n\times n$ $*$-palindromic pencil can have $0,2,\hdots,n$ unit eigenvalues, which are different to each other (when $n$ is even), or $1,3,\hdots,n$ unit eigenvalues, which are different to each other (when $n$ is odd).

Another approach to obtain the generic canonical structures of matrices or matrix pencils, different from the topological one followed in this paper, is by means of the codimension of the orbits or bundles. More precisely, the generic canonical structures are those for which the corresponding bundles have codimension $0$ within the whole space of matrices or matrix pencils. This has been the approach followed in \cite[Cor. 7.1]{de95} for the {\em Kronecker Canonical Form} of rectangular matrix pencils, and in \cite{dd10} and \cite{dd11} for, respectively, the CFC and the $^*$CFC of $n\times n$ complex matrices. The result presented in Theorem \ref{main_th}-(a) for the CFC is in accordance with \cite[Th. 4]{dd10}, as expected (namely, the same unique generic CFC is obtained in both cases, which has slightly different expressions for $n$ even or odd). However, we want to highlight that Theorem 5.3 in \cite{dd11} is incomplete since it states that there is only one generic $^*$CFC, which, again, has slightly different expressions for $n$ even or odd. The generic $^*$CFC in \cite[Theorem 5.3]{dd11} is only one of the $\lfloor n/2\rfloor+1$ $^*$CFCs defined in Theorem \ref{main_th}-(b), more precisely that corresponding to $\ell = \lfloor n/2\rfloor$. The reason of this disagreement is not that the codimension approach fails in this case, but that \cite[Theorem 5.3]{dd11} did not identify correctly all the existing $^*$congruence bundles that have codimension $0$ (see Remark \ref{rem.codimension}).

In terms of bilinear and sesquilinear forms, Theorem \ref{main_th} provides the matrix representation of a generic bilinear or sesquilinear form. In particular, there is just one generic $n\times n$ bilinear form, whereas there are $\lfloor n/2\rfloor+1$ generic $n\times n$ sesquilinear forms. Further insights on the geometry and topology of the spaces of bilinear and sesquilinear forms, in terms of the changes of the canonical forms of the matrix representations, can be found in \cite{dfs12,dfs14,frs16}.

In the last section of the paper, we perform several numerical experiments with {\sc matlab} to confirm the genericity of the $^\star$CFC identified in Theorem \ref{main_th}. In these experiments, we have computed the eigenvalues of a large number of $n\times n$ $\top$-palindromic and $*$-palindromic pencils (for $n=24$ and $25$) generated with random matrices, and we have counted the number of unit eigenvalues of each pencil. Then, we have counted the number of pencils having exactly $m$ unit eigenvalues, for each $1\leq m\leq n$. Since, as we mentioned before, the distinct unit eigenvalues of a palindromic pencil correspond to distinct $1\times 1$ Type I blocks in the $^\star$CFC, we can infer the $^\star$CFC of the matrices that generate these pencils. The outcome of the experiments indicates that all generic $^\star$CFCs arise, and only these ones show up. This provides an experimental confirmation of Theorem \ref{main_th}.

\section{Canonical forms and bundles for congruence}\label{basic_sec}
By $I_k$ we denote the $k\times k$ identity matrix, and $\ii$ denotes the imaginary unit (that is, $\ii^2=-1$). $\C^{n\times n}$ denotes the set of $n\times n$ complex matrices. For any subset $\mathcal{W} \subseteq \C^{n\times n}$, we denote by $\overline{\mathcal{W}}$ its closure in the standard topology of $\C^{n\times n}$.

The CFC and the $^*$CFC were obtained in \cite{hs04} based on results in \cite{serg1988}. Direct proofs of these canonical forms were presented in \cite{hs06}. We will need the following matrices introduced in \cite{hs06}, for each $k\geq1$, and $\mu\in\C$, to describe these forms:
\begin{eqnarray}
   J_k(\mu):= \begin{bmatrix}
       \mu &1\\&\ddots&\ddots\\&&\mu&1\\&&&\mu
   \end{bmatrix}_{k\times k},\label{jordanblock}\\
   \Gamma_k:=\left[\begin{array}{c@{\mskip8mu}c@{\mskip8mu}c@{\mskip8mu}c@{\mskip8mu}c@{\mskip8mu}c}0&&&&&(-1)^{k+1}\\[-4pt]
&&&&\iddots&(-1)^k\\[-4pt]&&&-1&\iddots&\\&&1&1&&\\&-1&-1&&&\\1&1&&&&0\end{array}\right]_{k\times k},\label{gamma}\\
    H_{2k}(\mu):=\begin{bmatrix}
    0&I_k\\
     J_k(\mu)&0
    \end{bmatrix}_{2k\times2k}\label{h2k}.
\end{eqnarray}
The matrix in \eqref{jordanblock} is a Jordan block associated with the eigenvalue $\mu$ (see, for instance, \cite[Def. 3.1.1]{hj13}). We highlight that $\Gamma_1=1$ and $J_1(\mu)=\mu$.

The previous matrices are the building blocks of the CFC and $^*$CFC, which we recall in the next theorem as stated in \cite{hs06}.

\begin{theorem}{\rm ({Canonical form for congruence and for $^*$congruence}, \cite[Th. 1.1]{hs06})}.\label{hs_th}
   \begin{itemize}
\item[\rm(a)] Each complex square matrix is congruent to a direct sum, uniquely determined up to permutation of summands, of canonical matrices of the three types
\begin{center}
    {\renewcommand{\arraystretch}{1.6}
     \renewcommand{\tabcolsep}{0.3cm}
    \begin{tabular}{|c|c|}\hline
      {\rm  Type 0} & $J_k(0)$  \\\hline
      {\rm  Type I} & $\Gamma_k$ \\\hline
      {\rm  Type II} & \begin{tabular}{c}
           $H_{2k}(\mu)$, \;  $0\neq\mu\neq(-1)^{k+1}$, \\
           $\mu$ is determined up to replacement by $\mu^{-1}$
      \end{tabular} \\\hline
    \end{tabular}}
    \end{center}

   \item[\rm(b)] Each complex square matrix is $^*$congruent to a direct sum, uniquely determined up to permutation of summands, of canonical matrices of the three types
    \begin{center}
    {\renewcommand{\arraystretch}{1.6}
     \renewcommand{\tabcolsep}{0.3cm}
    \begin{tabular}{|c|c|}\hline
      {\rm  Type 0} & $J_k(0)$  \\\hline
      {\rm  Type I} & $\alpha \Gamma_k, \quad |\alpha|=1$\\\hline
      {\rm  Type II} &$H_{2k}(\mu), \quad |\mu|>1$\\\hline
    \end{tabular}}
    \end{center}
    \end{itemize}
\end{theorem}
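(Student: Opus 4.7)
The plan is to prove the theorem by reducing the $^\star$congruence classification to a similarity classification via the \emph{cosquare} construction, combined with an inductive argument that peels off the singular part. I assume, as in Horn--Sergeichuk's paper, that we may work by induction on $n$, the base cases $n \le 2$ being a direct verification.

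First, I would show a ``regularization'' step: if $A \in \mathbb{C}^{n \times n}$ is singular, then $A$ is $^\star$congruent to a block diagonal matrix $J_k(0) \oplus A'$ with $A' \in \mathbb{C}^{(n-k)\times(n-k)}$. To do this, analyze the chain of kernels $\ker A \subseteq \ker A A^\star \subseteq \cdots$; the maximal singular Jordan chain of $A$ provides a subspace on which $A$ acts like $J_k(0)$, and one constructs a congruence transformation that splits off this block (this is essentially a Witt-type decomposition argument adapted to the bilinear/sesquilinear form represented by $A$). Apply the inductive hypothesis to $A'$. After this step, we may assume $A$ is nonsingular.

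Second, for nonsingular $A$ introduce the cosquare $\Phi(A) := A^{-\star} A$. A direct calculation shows that $B = P^\star A P$ is $^\star$congruent to $A$ if and only if $\Phi(B) = P^{-1}\Phi(A) P$, i.e., the cosquare is a $^\star$congruence invariant taking values in similarity classes. Moreover, $\Phi(A)$ is not an arbitrary matrix: a second calculation shows that $\Phi(A)$ is similar to $\Phi(A)^{-\star}$ (with the appropriate involution), which forces its Jordan structure to respect the pairing $\mu \leftrightarrow \mu^{-1}$ for $\star = \top$ and $\mu \leftrightarrow \bar{\mu}^{-1}$ for $\star = *$. Using the standard pairing analysis for such matrices, the Jordan blocks of $\Phi(A)$ split into:
\begin{itemize}
\item pairs of equal-size blocks for eigenvalues $\mu, \mu^{-1}$ (respectively $\mu, \bar{\mu}^{-1}$) that are distinct;
\item individual blocks at the self-paired eigenvalues ($\mu = (-1)^{k+1}$ in the $\top$ case; $|\alpha|=1$ in the $*$ case).
\end{itemize}
Then, for each such Jordan piece of $\Phi(A)$, exhibit an explicit canonical matrix whose cosquare realizes that piece: a direct computation shows $\Phi(\Gamma_k) = J_k((-1)^{k+1})$ (respectively, $\Phi(\alpha \Gamma_k)$ has the appropriate unimodular eigenvalue) and $\Phi(H_{2k}(\mu))$ has Jordan structure consisting of a pair of $k\times k$ blocks at $\mu$ and $\mu^{-1}$ (respectively, $\mu$ and $\bar{\mu}^{-1}$). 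Since $^\star$congruence of nonsingular matrices is detected by similarity of cosquares, matching Jordan pieces to canonical blocks yields the direct-sum decomposition; uniqueness follows because the Jordan structure of the cosquare is an invariant of the class.

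The main obstacle is the regularization step in the first paragraph: unlike in classical bilinear algebra over fields of characteristic zero with symmetric or alternating forms, the matrix $A$ here represents an arbitrary bilinear or sesquilinear form with no symmetry, so the kernels of $A$ and $A^\star$ differ in general, and ``radical-like'' subspaces interact subtly. Concretely, the hard point is to show that a maximal null chain can be \emph{congruently} split off as a whole $J_k(0)$ block, rather than merely identified as an invariant subspace; this requires a careful choice of complementary vectors so that the off-diagonal coupling blocks vanish after the change of basis. Once this splitting is available, the cosquare argument of the second paragraph is essentially linear-algebraic bookkeeping, and the remaining task is only to verify the exclusion condition $\mu \neq (-1)^{k+1}$ (respectively $|\mu|>1$) that eliminates overlap between Type I and Type II blocks, and to check the indeterminacy $\mu \leftrightarrow \mu^{-1}$ in the Type II normalization under $\top$-congruence.
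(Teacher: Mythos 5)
The paper does not prove Theorem~\ref{hs_th}; it is quoted from \cite[Th.~1.1]{hs06} and used as a black box, so there is no internal proof to compare against. Judged on its own terms, your outline follows the general architecture of the Horn--Sergeichuk argument (peel off the singular Type~0 part, then classify the nonsingular part via the cosquare $\Phi(A)=A^{-\star}A$), but it contains a gap in the $^*$congruence case that the stated strategy cannot repair.

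The claim that ``$^\star$congruence of nonsingular matrices is detected by similarity of cosquares'' is correct for $\star=\top$, but it is \emph{false} for $\star=*$. Indeed, if $|\alpha|=1$ then
$\Phi(\alpha\Gamma_k)=(\alpha\Gamma_k)^{-*}(\alpha\Gamma_k)=(\alpha/\overline{\alpha})\,\Gamma_k^{-*}\Gamma_k=\alpha^2\,\Phi(\Gamma_k)$,
so $\alpha\Gamma_k$ and $-\alpha\Gamma_k$ have \emph{identical} cosquares and hence trivially similar cosquares, yet by the uniqueness clause of part~(b) they are never $^*$congruent (for $k=1$, $\overline{p}\,\alpha\,p=|p|^2\alpha$ has positive scalar factor, so it cannot equal $-\alpha$). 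Thus the map from nonsingular $^*$congruence classes to similarity classes of cosquares is two-to-one at every unimodular eigenvalue rather than injective, and recovering the sign of each $\alpha$ requires an additional Hermitian ``sign'' invariant attached to the corresponding generalized eigenspace of the cosquare, which your outline does not supply. This is exactly why the Type~I blocks in part~(b) carry a continuous parameter $\alpha$ with $|\alpha|=1$ while those in part~(a) do not; the present paper flags the same phenomenon in Section~\ref{sec.techlemmas}, noting that the KCF of $A+\lambda A^*$ determines the $^*$CFC of $A$ only up to multiplication of Type~I blocks by $-1$. Separately, the regularization step (splitting off a $J_k(0)$ summand by a congruence) that you identify as the main obstacle is genuinely nontrivial and is only asserted in your write-up, but even granting it, cosquare similarity alone is insufficient to obtain the $^*$CFC.
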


\medskip

For the purposes of this work, it is convenient to express the CFC in Theorem \ref{hs_th}-(a) as in Corollary \ref{coro.CFC}. The objectives of this slight reformulation are to eliminate the indeterminacy of the possible replacement of $\mu$ by $\mu^{-1}$, which will simplify the statements and proofs of some results, and to distinguish the blocks $H_{2k}((-1)^k)$ from the blocks $H_{2k}(\mu)$ for other values of $\mu$. The reason is that the blocks $H_{2k}((-1)^k)$ behave very differently than $H_{2k}(\mu)$ for $\mu \ne (-1)^k$ with respect to the dimension of congruence orbits \cite[Th. 2]{dd10} and under perturbations \cite[Ths. 2.2, 2.3 and 3.5]{ddkks15}

\begin{coro} \label{coro.CFC}
Each complex square matrix is congruent to a direct sum, uniquely determined up to permutation of summands, of canonical matrices of the types
\begin{center}
    {\renewcommand{\arraystretch}{1.6}
     \renewcommand{\tabcolsep}{0.3cm}
    \begin{tabular}{|c|c|}\hline
      {\rm  Type 0} & $J_k(0)$  \\\hline
      {\rm  Type I} & $\Gamma_k$ \\\hline
      {\rm  Type II-(a)} &
           $H_{2k}(\mu)$, \;  $|\mu| >1$ or $\mu = e^{i \theta}$, with $0<\theta <\pi$
     \\\hline
     {\rm  Type II-(b)} &
           $H_{2k}((-1)^k)$
     \\\hline
    \end{tabular}}
    \end{center}
\end{coro}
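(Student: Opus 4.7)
The plan is to derive Corollary~\ref{coro.CFC} directly from Theorem~\ref{hs_th}-(a) by fixing, inside each equivalence class $\{\mu,\mu^{-1}\}$ of admissible Type II parameters, a canonical representative, and by isolating the self-inverse admissible value $(-1)^k$ as its own type.

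First, I would apply Theorem~\ref{hs_th}-(a) to an arbitrary $A\in\C^{n\times n}$ to obtain a direct sum decomposition, unique up to permutation of summands, into blocks $J_k(0)$, $\Gamma_k$, and $H_{2k}(\mu)$. For each fixed $k$, the parameters $\mu$ attached to the Type II blocks of size $2k$ range over the admissible set $\mathcal{A}_k := \{\mu\in\C : \mu\ne 0,\ \mu\ne (-1)^{k+1}\}$ modulo the involution $\iota:\mu\mapsto\mu^{-1}$, which indeed maps $\mathcal{A}_k$ to itself because $\big((-1)^{k+1}\big)^{-1}=(-1)^{k+1}$.

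Next, I would choose an explicit fundamental domain for the action of $\iota$ on $\mathcal{A}_k$ by splitting it according to the location of $\mu$. When $|\mu|\ne 1$, exactly one element of the pair $\{\mu,\mu^{-1}\}$ has modulus greater than $1$, so I pick that one. When $|\mu|=1$ and $\mu\notin\{1,-1\}$, I write $\mu=e^{\ii\theta}$ with $\theta\in(-\pi,0)\cup(0,\pi)$; since $\iota(\mu)=e^{-\ii\theta}$, exactly one of the two has argument in $(0,\pi)$, and I pick that one. Finally, when $\mu\in\{1,-1\}$ the involution fixes $\mu$; the admissible set contains exactly one such value, namely $(-1)^k$, because $(-1)^{k+1}$ is excluded while $(-1)^k\in\mathcal{A}_k$ follows from $(-1)^k\ne(-1)^{k+1}$.

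These three disjoint families together form a complete, non-redundant system of representatives of $\mathcal{A}_k/\iota$. Replacing each Type II block in the decomposition by the block associated with its chosen representative yields a direct sum of blocks of the four types listed in the corollary: the first two families produce Type II-(a), and the singleton produces Type II-(b). Uniqueness up to permutation of summands is inherited from Theorem~\ref{hs_th}-(a), because the indeterminacy $\mu\leftrightarrow\mu^{-1}$ has been eliminated by the explicit choice of fundamental domain, and because the four types are mutually exclusive (a Type I block $\Gamma_k$ is not of the form $H_{2j}(\mu)$, a Type II-(a) block has parameter outside $\{1,-1\}$, a Type II-(b) block of size $2k$ has the specific parameter $(-1)^k$, and a Type 0 block corresponds to the nilpotent case $\mu=0$ which is forbidden in $\mathcal{A}_k$). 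The only substantive verification is that $(-1)^k\in\mathcal{A}_k$, which is immediate; the rest is bookkeeping, and I do not anticipate any real obstacle.
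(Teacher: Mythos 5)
Your proposal is correct and fills in, carefully and completely, the argument that the paper leaves implicit (the paper states the corollary without proof, relying on the reader to see that choosing a fundamental domain for the involution $\mu\mapsto\mu^{-1}$ on the admissible parameter set, and isolating the unique fixed point $(-1)^k$, eliminates the indeterminacy in Theorem~\ref{hs_th}-(a)). Your fundamental-domain analysis, the observation that $(-1)^k$ is the only self-inverse admissible parameter, and the inheritance of uniqueness are exactly the intended reasoning.
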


\medskip

Definitions \ref{bundlec_def} and \ref{bundle*_def} introduce the notions of congruence and $^*$congruence bundles that are used in this work. They play a central role in the statement of the main results, Theorems \ref{main_th} and \ref{th.opennness}, because the generic CFC and $^*$CFC are defined as those that generate certain bundles which are open subsets of $\C^{n\times n}$ and such that the closure of their union is precisely $\C^{n\times n}$.

We emphasize that Definitions \ref{bundlec_def} and \ref{bundle*_def} do not coincide with the definitions of bundles in \cite[Def. 5.2]{dd11} and \cite[p. 62]{dd10}. The reason is that the definitions in \cite{dd10,dd11} are not fully satisfactory because, among other facts, not all congruence or $^*$congruence orbits included in those bundles have the same dimension, as can be checked with the dimension formulas in \cite[Th. 3.3]{dd11} and \cite[Th. 2]{dd10}. This drawback is fixed in Definitions \ref{bundlec_def} and \ref{bundle*_def}. As discussed in \cite[Sec. 6]{ddkks15} (see also the paragraph above Definition 1.2 in \cite{ddkks15}), it is not evident to give a definition of bundles of matrices under congruence which satisfy properties analogous to those of the bundles under similarity introduced in \cite{arnold71}. This led to the authors of \cite{ddkks15} to define that $A,B \in \C^{n\times n}$ belong to the same congruence bundle if and only if the pencils $A + \la A^\top$ and $B + \la B^\top$  belong to the same strict equivalence bundle, as defined in \cite[Sec. 3.1]{eek99}. This definition is based on the fact that $A,B \in \C^{n\times n}$ are congruent if and only if $A + \la A^\top$ and $B + \la B^\top$ are strictly equivalent \cite{roiter79}. However, it is not possible to extend this definition to $^*$congruence bundles because it is {\em not} true that $A,B \in \C^{n\times n}$ are $^*$congruent if and only if $A + \la A^*$ and $B + \la B^*$ are strictly equivalent. Therefore, we have decided to adopt Definitions \ref{bundlec_def} and \ref{bundle*_def}, which at least are enough for the purposes of this work.

\medskip

\begin{defi}\label{bundlec_def} {\rm (Congruence bundle).}
     Let the {\rm CFC} of $A\in\C^{n\times n}$ be $$\bigoplus_{j=1}^{n_1}J_{k_j}(0)\oplus\bigoplus_{j=1}^{n_2}\Gamma_{\ell_j}\oplus\bigoplus_{j=1}^{n_3} H_{2m_j}(\mu_j) \oplus \bigoplus_{j=1}^{n_4} H_{2 p_j} ( (-1)^{p_j}),$$ for some $\mu_j\in\C$ such that $|\mu_j| >1$ or $\mu_j = e^{\ii \theta_j}$, with $0< \theta_j < \pi$, for all $1\leq j\leq n_3$. Then, the {\em congruence bundle} of $A$ is the following subset of $\C^{n\times n}$
    \begin{align}\label{bundlec}
        \bunc(A) := & \left\{P \left(\bigoplus_{j=1}^{n_1}J_{k_j}(0)\oplus\bigoplus_{j=1}^{n_2}\Gamma_{\ell_j}\oplus\bigoplus_{j=1}^{n_3} H_{2m_j}(\widetilde \mu_j) \oplus \bigoplus_{j=1}^{n_4} H_{2 p_j} ( (-1)^{p_j}) \right)P^\top \right. \nonumber \\
        & \hspace*{0.8cm} :\ \left. \begin{array}{l}
             P\in \C^{n\times n} \; \mbox{\rm is invertible},  \\
            \mbox{$|\widetilde \mu_j| >1$ or $\widetilde \mu_j = e^{\ii \widetilde{\theta}_j}$, $0< \widetilde{\theta}_j < \pi$, for } 1\leq j \leq n_3,  \\[0.2cm]
              \widetilde\mu_i\neq\widetilde\mu_j \Longleftrightarrow \mu_i\neq\mu_j
        \end{array}\right\}.
    \end{align}
\end{defi}

\medskip

We note that the conditions imposed on the parameters $\widetilde{\alpha}_j$ in Definition \ref{bundle*_def} are related to the part 5 of the Theorem 3.3 in \cite{dd11}.
\begin{defi}\label{bundle*_def} {\rm ($^*$congruence bundle).}
    Let the {\rm$^*$CFC} of $A\in\C^{n\times n}$ be $$\bigoplus_{i=1}^{n_1}J_{k_i}(0)\oplus\bigoplus_{i=1}^{n_2}\alpha_i\Gamma_{\ell_i}\oplus\bigoplus_{i=1}^{n_3} H_{2m_i}(\mu_i),$$ for some $\alpha_i,\mu_i\in\C$ with $|\alpha_i|=1$, for all $1\leq i\leq n_2$, and $|\mu_i|>1$, for all $1\leq i\leq n_3$. Then, the {\rm $^*$congruence bundle} of $A$ is the following subset of $\C^{n\times n}$
    \begin{align}\label{bundle*}
        \bunh(A):= &\left\{P\left(\bigoplus_{i=1}^{n_1}J_{k_i}(0)\oplus\bigoplus_{i=1}^{n_2}\widetilde \alpha_i\Gamma_{\ell_i}\oplus\bigoplus_{i=1}^{n_3} H_{2m_i}(\widetilde\mu_i)\right)P^* \right. \nonumber \\  & \hspace*{0.8cm} :\     \left. \begin{array}{l}
             P\in \C^{n\times n} \; \mbox{\rm is invertible},  \\[0.1cm]
              |\widetilde\alpha_i|=1,\ 1\leq i\leq n_2,\\[0.1cm]
              |\widetilde\mu_i|>1,\ 1\leq i\leq n_3,\\[0.1cm]
\widetilde\alpha_i^2\neq\widetilde\alpha_j^2\Longleftrightarrow\alpha_i^2\neq\alpha_j^2, \quad \mbox{if $\ell_i = \ell_j \, ({\rm mod} \, 2)$},
\\[0.1cm]
\widetilde\alpha_i^2\neq -\widetilde\alpha_j^2\Longleftrightarrow\alpha_i^2\neq -\alpha_j^2, \quad \mbox{if $\ell_i \ne \ell_j \, ({\rm mod} \, 2)$},
\\[0.1cm]
\widetilde\mu_i\neq\widetilde\mu_j\Longleftrightarrow\mu_i\neq\mu_j
        \end{array}\right\}.
    \end{align}
\end{defi}

\medskip

In words, the congruence bundle of $A$ is the set of $n\times n$ matrices with the same CFC as $A$, up to the values of the distinct complex numbers $\mu_i \, (\ne \pm 1)$ associated to the Type II-(a) blocks. Analogously, the $^*$congruence bundle of $A$ is the set of $n\times n$ matrices with the same $^*$CFC as $A$, up to the values of the complex numbers $\alpha_i$ and $\mu_i$ associated to the Type I and Type II blocks, as long as these values satisfy the same inequality constraints as in $A$ for all the matrices in the $^*$congruence bundle. It is worth to emphasize that the values $\alpha_i$ and $\mu_i$ for the matrices in the same bundle are arbitrary as long as the inequality constraints in the definitions of bundles are satisfied, as well as the restrictions imposed in Corollary \ref{coro.CFC} and Theorem \ref{hs_th}-(b).

In the last part of this section, we define some subsets of complex numbers that will be used in the rest of the paper and, in particular, in some of the technical lemmas of Section \ref{sec.techlemmas}. Note that Definition \ref{def.auxsets} uses the concept of ``set'' with its standard mathematical meaning, i.e., repetitions (or multiplicities) of elements are not allowed.

\begin{defi} \label{def.auxsets} Let $A\in \C^{n\times n}$.
\begin{enumerate}
    \item[\rm (a)] $\mathcal{S}_{H}^{\top} (A)$ is the set of complex numbers $\mu$ that appear in the Type II-(a) blocks of the CFC of $A$.
    \item[\rm (b)] $\mathcal{S}_{\Gamma}^{*} (A)$ is the set of complex numbers $\alpha$ that appear in the Type I blocks of the $^*$CFC of $A$.
     \item[\rm (c)] $\mathcal{S}_{\Gamma , 2}^{*} (A) := \{\alpha^2 \, : \, \alpha \in \mathcal{S}_{\Gamma}^{*} (A)\}$.
     \item[\rm (d)] $\mathcal{S}_{\Gamma , 2}^{*,neg} (A) := \{-\alpha^2 \, : \, \alpha \in \mathcal{S}_{\Gamma}^{*} (A)\}$.
    \item[\rm (e)] $\mathcal{S}_{H}^{*} (A)$ is the set of complex numbers $\mu$ that appear in the Type II blocks of the $^*$CFC of $A$.
\end{enumerate}

\end{defi}

\section{Technical lemmas on congruence bundles}\label{sec.techlemmas}
In this section we prove several auxiliary technical lemmas on congruence bundles that are needed to prove the main results, i.e., Theorems \ref{main_th} and \ref{th.opennness}. Some of these results can be stated and proved simultaneously for congruence and $^*$congruence bundles. Therefore, from now on, we use the notation $\bunstar(\cdot)$ to include both $\bunc(\cdot)$ and $\bunh(\cdot)$ from Definitions \ref{bundlec_def} and \ref{bundle*_def}. In words, we will refer to $\bunstar(\cdot)$ as $^\star$congruence bundle.

In several of the proofs of this paper, we will make use of the connection between the $^\star$CFC of a matrix $A\in\C^{n\times n}$ and the Kronecker canonical form (KCF) under strict equivalence of the pencil $A+\la A^\star$. The KCF is the canonical form of matrix pencils under {\em strict equivalence}, that is, under multiplication on the right and on the left by different invertible matrices, and is a direct sum of certain blocks (see, for instance, \cite[Ch. XII]{gant}). We have already mentioned that $A,B \in \C^{n\times n}$ are congruent if and only if the pencils $A+ \la A^\top$ and $B+ \la B^\top$ are strictly equivalent. Therefore, it is not surprising that there is a bijection between the blocks of the CFC of $A$ and the blocks of the KCF of $A+\la A^\top$. This bijection can be found, for instance, in \cite[Th. 4]{d16}, which will be referred to in the proofs of several of the results of this paper. In contrast, there is no a bijection between the blocks of the $^*$CFC of $A$ and the blocks of the KCF of $A+\la A^*$. More precisely, the $^*$CFC of $A$ determines the KCF of $A + \la A^*$, but the KCF of $A + \la A^*$ only determines the $^*$CFC of $A$ up to multiplication of any Type I block by $-1$. The detailed relation between such $^*$CFC and KCF can be found in \cite[Th. 2.13-(b)]{dd25}, which will be also referred to in the proofs of several of the results of this paper.

The next lemma allows us to determine if a matrix $A$ is in the closure of the $^\star$congruence bundle of another matrix just by studying whether the $^\star$CFC of $A$ belongs to that closure or not.

\begin{lemma}\label{congruencebundle_lem}
Let $A,B\in\C^{n\times n}$. Then  $A\in\cbunstar(B)$ if and only if $P^\star AP\in\cbunstar(B)$, for every invertible matrix $P$.
\end{lemma}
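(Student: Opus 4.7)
The plan is to reduce the statement to two simple ingredients: (i) the $^\star$congruence bundle $\bunstar(B)$ is, by construction, invariant under the action $X \mapsto Q^\star X Q$ for every invertible $Q$, and (ii) this action is continuous in $X$, so invariance passes to the closure.

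The ``if'' direction is trivial: taking $P = I_n$ in the hypothesis $P^\star A P \in \cbunstar(B)$ immediately gives $A \in \cbunstar(B)$. So all the work is in the ``only if'' direction.

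For the ``only if'' direction, I would first prove the auxiliary fact that $\bunstar(B)$ itself is closed under $^\star$congruence, i.e., if $X \in \bunstar(B)$ and $Q$ is invertible, then $Q^\star X Q \in \bunstar(B)$. This is immediate from Definitions \ref{bundlec_def} and \ref{bundle*_def}: any element of $\bunstar(B)$ has the form $X = R^\star M R$, where $M$ is a direct sum of canonical blocks whose parameters satisfy the constraints defining the bundle, and $R$ is invertible; then $Q^\star X Q = (RQ)^\star M (RQ)$ with $RQ$ invertible and with the \emph{same} canonical middle piece $M$, so $Q^\star X Q \in \bunstar(B)$.

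Now assume $A \in \cbunstar(B)$ and let $P$ be any invertible matrix. Choose a sequence $\{A_k\}_{k\in\mathbb{N}} \subseteq \bunstar(B)$ with $A_k \to A$ in the standard topology of $\C^{n\times n}$. The map $\phi_P : \C^{n\times n} \to \C^{n\times n}$ defined by $\phi_P(X) := P^\star X P$ is continuous (it is linear in $X$ over $\R$, and in fact polynomial in the entries of $X$), so $\phi_P(A_k) = P^\star A_k P \to P^\star A P$. By the auxiliary fact just established, each $P^\star A_k P$ lies in $\bunstar(B)$, and hence the limit $P^\star A P$ lies in $\cbunstar(B)$, as desired.

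The argument has essentially no technical obstacle; the only thing worth checking carefully is that the constraints on the parameters $\widetilde{\mu}_j$, $\widetilde{\alpha}_i$ in \eqref{bundlec} and \eqref{bundle*} only involve the canonical middle piece and not the conjugating matrix, so that replacing $R$ by $RQ$ keeps us inside the bundle. This is clear from the definitions, which bundle elements together according to the canonical form alone.
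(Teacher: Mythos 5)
Your proof is correct and follows essentially the same approach as the paper's: take a sequence $B_k \to A$ in $\bunstar(B)$, apply $X \mapsto P^\star X P$ to get a sequence in $\bunstar(B)$ converging to $P^\star A P$, and take $P = I_n$ for the converse. The only difference is that you explicitly justify the invariance of $\bunstar(B)$ under $^\star$congruence and the continuity of $\phi_P$, which the paper leaves implicit.
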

\begin{proof}
    Assume that $A\in\cbunstar(B)$. Then, there is a sequence of matrices $\{B_k\}_{k\in\mathbb N}$ which converges to $A$ and with $B_k\in\bunstar(B)$, for all $k\in\mathbb N$. Then, the sequence $\{P^\star B_kP\}_{k\in\mathbb N}$ converges to $P^\star AP$ and $P^\star B_kP\in\bunstar(B)$. Therefore, $P^*AP\in\cbunstar(B)$. The other implication is trivial, taking $P=I_n$.
\end{proof}

Lemmas \ref{lemm.dirsumbund1} and \ref{lemm.dirsumbund2} for $^*$congruence bundles, and their counterparts Lemmas \ref{lemm.dirsumbund3} and \ref{lemm.dirsumbund4} for congruence bundles, prove some results about how bundles and their closures interact with direct sums. These results have the same flavor as Lemmas 3.2 and 3.3 in \cite{ddd24}. We will prove in detail Lemmas \ref{lemm.dirsumbund1} and \ref{lemm.dirsumbund2}, which are more involved, and we will just sketch the proofs of Lemmas \ref{lemm.dirsumbund3} and \ref{lemm.dirsumbund4}. These lemmas are of interest because Lemma \ref{congruencebundle_lem} allows us to reduce our problems to matrices which are in $^\star$CFC. Then, our strategy is to study the most generic bundles whose closures contain each of the canonical blocks which are the direct summands of the $^\star$CFC, and, finally, to combine such bundles. In this setting, we will need to choose the matrices that are used to define the bundles in such a way that the closure of the bundle corresponding to its direct sum includes all the possible direct sums of the matrices in the individual bundles. Example \ref{ex.bundles} illustrates this question.

\begin{example} \label{ex.bundles} {\rm
Consider the $^*$congruence bundles $\bunh(H_2(3) \oplus \Gamma_1)$ and $\bunh(H_4(3) \oplus \Gamma_1)$. Then $H_2(3) \oplus \Gamma_1 \in \bunh(H_2(3) \oplus \Gamma_1)$ and $H_4(5) \oplus \left(e^{\ii \pi/4} \Gamma_1 \right) \in \bunh(H_4(3) \oplus \Gamma_1)$, but
$$
H_2(3) \oplus \Gamma_1 \oplus H_4(5) \oplus \left(e^{\ii \pi/4} \Gamma_1 \right) \notin \bunh(H_2(3) \oplus \Gamma_1 \oplus  H_4(3) \oplus \Gamma_1).
$$
Note that $\bunh(H_4(3) \oplus \Gamma_1) = \bunh(H_4(5) \oplus \left(e^{\ii \pi/4} \Gamma_1 \right))$ and that, obviously,
\begin{equation} \label{eq1.exbundle}
H_2(3) \oplus \Gamma_1 \oplus H_4(5) \oplus \left(e^{\ii \pi/4} \Gamma_1 \right) \in \bunh(H_2(3) \oplus \Gamma_1 \oplus  H_4(5) \oplus \left(e^{\ii \pi/4} \Gamma_1 \right) ).
\end{equation}
However, $H_2(3) \oplus \Gamma_1 \oplus H_4(3) \oplus \Gamma_1$ does not belong to the bundle in \eqref{eq1.exbundle}. In fact, it is not difficult to prove that there is no bundle that includes $A_1 \oplus A_2$ for any  $A_1 \in \bunh(H_2(3) \oplus \Gamma_1)$ and for any $A_2 \in \bunh(H_4(3) \oplus \Gamma_1)$. In order to define a mathematical object that includes all such direct sums, we need to use closures of bundles, in particular,
\begin{equation} \label{eq2.exbundle}
\cbunh(H_2(3) \oplus \Gamma_1 \oplus  H_4(5) \oplus \left(e^{\ii \pi/4} \Gamma_1 \right) )
\end{equation}
contains all the desired $A_1 \oplus A_2$. To illustrate this statement consider, for instance, for each $k \in {\mathbb N}$, the pencil $H_2(3) \oplus \Gamma_1 \oplus H_4(3+ 1/k) \oplus \left(e^{\ii /k} \Gamma_1 \right)$, which belongs to $\bunh(H_2(3) \oplus \Gamma_1 \oplus  H_4(5) \oplus \left(e^{\ii \pi/4} \Gamma_1 \right) )$, and, since
$$
\lim_{k\rightarrow \infty} H_2(3) \oplus \Gamma_1 \oplus H_4(3+ 1/k) \oplus \left(e^{\ii /k} \Gamma_1 \right) = H_2(3) \oplus \Gamma_1 \oplus H_4(3) \oplus \Gamma_1,
$$
we get that $H_2(3) \oplus \Gamma_1 \oplus H_4(3) \oplus \Gamma_1 \in \cbunh(H_2(3) \oplus \Gamma_1 \oplus  H_4(5) \oplus \left(e^{\ii \pi/4} \Gamma_1 \right) )$.  The key idea we have used to construct \eqref{eq2.exbundle} is to choose one matrix $C_1\in \bunh(H_2(3) \oplus \Gamma_1)$ and another $C_2\in \bunh(H_4(3) \oplus \Gamma_1)$ such that their corresponding sets in Definition \ref{def.auxsets}(c)-(e) are disjoint. See Lemma \ref{lemm.dirsumbund1} for the general precise conditions.
}
\end{example}

\begin{lemma} \label{lemm.dirsumbund1} If $A_1 \in \C^{p_1 \times p_1} , \ldots , A_q \in \C^{p_q \times p_q}$ and $C_1 \in \C^{p_1 \times p_1} , \ldots , C_q \in \C^{p_q \times p_q}$ satisfy
\begin{enumerate}
    \item[\rm (1)] $A_i \in \bunh (C_i)$, for $i=1, \ldots , q$,

    \item[\rm (2)] $\mathcal{S}_{\Gamma,2}^* (C_i) \cap \mathcal{S}_{\Gamma,2}^* (C_j) = \emptyset$ and
                  $\mathcal{S}_{\Gamma,2}^* (C_i) \cap \mathcal{S}_{\Gamma,2}^{*,neg} (C_j) = \emptyset$ for $i \ne j$, $i,j=1, \ldots q$,
    \item[\rm (3)] $\mathcal{S}_{H}^* (C_i) \cap \mathcal{S}_{H}^* (C_j) = \emptyset$ for $i \ne j$, $i,j=1, \ldots q$,
\end{enumerate}
 then $$A_1 \oplus \cdots \oplus A_q \in \cbunh (C_1 \oplus \cdots \oplus C_q).$$ Moreover, if $A_1, \ldots , A_q$ satisfy the conditions {\rm (2)} and {\rm (3)} (with $A_i,A_j$ instead of $C_i,C_j$), then $A_1 \oplus \cdots \oplus A_q \in \bunh (C_1 \oplus \cdots \oplus C_q)$.
\end{lemma}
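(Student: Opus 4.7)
The plan is to reduce, via Lemma \ref{congruencebundle_lem}, to checking the statement for matrices that are already in $^*$CFC, and then to argue the two conclusions by a direct verification of the bundle conditions and by a small perturbation argument, respectively.

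Since $A_i \in \bunh(C_i)$ for each $i$, we may write $A_i = P_i^* D_i P_i$ for an invertible $P_i$ and a $^*$CFC matrix $D_i$ whose blocks have the same sizes and types as those of $C_i$ but whose parameters $\widetilde\alpha_{i,j}, \widetilde\mu_{i,j}$ (possibly different from those of $C_i$) satisfy the equality/inequality pattern imposed by Definition \ref{bundle*_def}. Setting $P := P_1 \oplus \cdots \oplus P_q$, we obtain $A_1 \oplus \cdots \oplus A_q = P^* (D_1 \oplus \cdots \oplus D_q) P$, so by Lemma \ref{congruencebundle_lem} it suffices to locate $D_1 \oplus \cdots \oplus D_q$ in $\bunh(C_1 \oplus \cdots \oplus C_q)$ or in its closure. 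I observe that the $^*$CFC of $C_1 \oplus \cdots \oplus C_q$ is, up to permutation of summands, the direct sum of the individual $^*$CFCs of the $C_i$, and that hypotheses (2) and (3) say exactly that, across distinct indices $i$, the resulting $\alpha^2$, $-\alpha^2$, and $\mu$ values are pairwise distinct in all the senses required by Definition \ref{bundle*_def}.

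For the ``moreover'' part, the analogous hypotheses on the $A_i$'s give the same cross-summand separation for the parameters of $D_1 \oplus \cdots \oplus D_q$, while the within-summand pattern of each $D_i$ already matches that of $C_i$. Combined, the global parameter pattern of $D_1 \oplus \cdots \oplus D_q$ matches that of $C_1 \oplus \cdots \oplus C_q$, giving $D_1 \oplus \cdots \oplus D_q \in \bunh(C_1 \oplus \cdots \oplus C_q)$ directly. For the general closure statement, the $A_i$'s may exhibit cross-summand parameter collisions. To fix this, for each $k \in \mathbb{N}$ I construct $D_i^{(k)}$ from $D_i$ by multiplying all Type I parameters of $D_i$ by a common unit factor $e^{\ii \phi_{i,k}}$ and all Type II parameters by a common real factor $r_{i,k}>1$ close to $1$, with $\phi_{i,k}\to 0$ and $r_{i,k}\to 1$ as $k\to\infty$, and with $\{\phi_{i,k}\}_i$ and $\{r_{i,k}\}_i$ chosen generically so that the perturbed family $\{D_i^{(k)}\}_{i=1}^q$ satisfies (2) and (3). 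Because the multiplicative perturbation is common within each summand, every within-summand equality (including equalities between squares of parameters) is automatically preserved, so $D_i^{(k)}\in\bunh(C_i)$ for large $k$; the previous case then yields $D_1^{(k)}\oplus\cdots\oplus D_q^{(k)}\in\bunh(C_1\oplus\cdots\oplus C_q)$, and passing to the limit $k\to\infty$ places $D_1\oplus\cdots\oplus D_q$ in the closure, as required.

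The main obstacle is to reconcile the intricate equality-versus-inequality conditions of Definition \ref{bundle*_def} — in particular the $\alpha^2$-versus-$(-\alpha^2)$ bifurcation governed by the parity of block sizes — with a perturbation strategy that simultaneously preserves every within-summand equality while forcing every required cross-summand distinctness. The saving observation is that all cross-summand constraints in question are open conditions on finitely many complex parameters, and the common-factor perturbation within each summand preserves every within-summand relation automatically, so any sufficiently small generic choice of phases $\phi_{i,k}$ and scales $r_{i,k}$ suffices.
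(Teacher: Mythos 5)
Your proposal is correct, and it follows the same overall architecture as the paper (reduce via Lemma~\ref{congruencebundle_lem} to $^*$CFC summands, handle the non-colliding case directly, and handle collisions by a small perturbation of the parameters followed by a limit). The difference lies in the perturbation scheme. The paper builds multisets of the ``colliding'' $\alpha$'s and $\mu$'s and perturbs each colliding instance by a \emph{distinct} rotation factor $\exp(\ii/(jk))$, arguing that for $k$ large all perturbed parameters become pairwise distinct. You instead rotate \emph{all} Type~I parameters of the $i$-th summand by a single common phase $e^{\ii\phi_{i,k}}$ and scale all its Type~II parameters by a single common real $r_{i,k}>1$. This is a genuinely nicer choice: since the within-summand perturbation is a common nonzero multiplier, every within-summand relation required by Definition~\ref{bundle*_def} (equalities and inequalities of $\widetilde\alpha_i^2$, of $-\widetilde\alpha_j^2$, and of $\widetilde\mu_i$) is preserved automatically, so $D_i^{(k)}\in\bunh(C_i)$ is immediate. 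With the paper's instance-by-instance perturbation, this membership needs an extra check and can actually fail in an edge case the paper does not address: if a single summand contains both $\alpha$ and $-\alpha$ as Type~I parameters attached to blocks of the same parity (so that $\alpha^2=(-\alpha)^2$ is a required equality in the bundle), and both collide with another summand, then perturbing them by different rotation factors destroys that required equality. Your common-factor scheme sidesteps this entirely. The only thing your write-up leaves implicit, which is worth stating, is why the cross-summand separation in conditions~(2) and~(3) can be achieved by arbitrarily small generic $\phi_{i,k}$ and $r_{i,k}$: each offending coincidence is a single closed condition of the form $e^{2\ii(\phi_i-\phi_j)}=\pm\beta^2/\alpha^2$ or $r_j/r_i=\mu/\nu$, and there are only finitely many of them, so any small generic choice of the $q$ phases and $q$ scales avoids them all (the paper makes the analogous point via an explicit verification of distinctness in $\mathcal{A}_k$ and $\mathcal{M}_k$).
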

\begin{proof} {\em Case} 1. If $A_1, \ldots , A_q$ satisfy conditions {\rm (2)} and {\rm (3)}, i.e., if $\mathcal{S}_{\Gamma,2}^* (A_i) \cap \mathcal{S}_{\Gamma,2}^* (A_j) = \emptyset$, $\mathcal{S}_{\Gamma,2}^* (A_i) \cap \mathcal{S}_{\Gamma,2}^{*,neg} (A_j) = \emptyset$, and $\mathcal{S}_{H}^* (A_i) \cap \mathcal{S}_{H}^* (A_j) = \emptyset$ for $i \ne j$, $i,j=1, \ldots q$, then Definition \ref{bundle*_def} and the fact that the $^*$CFC of a direct sum is equal to the direct sum of the $^*$CFCs of the direct summands imply that $A_1 \oplus \cdots \oplus A_q \in \bunh (C_1 \oplus \cdots \oplus C_q)$.

 {\em Case} 2. Assume that $A_1, \ldots , A_q$ do not satisfy conditions {\rm (2)} or {\rm (3)}. Let us define the following two multisets\footnote{Multiset is a modification of the concept of set that, unlike a set, allows for multiple instances for each of its elements. The number of instances of an element is called the multiplicity of that element in the multiset.} of complex numbers
 $$
 [\alpha_1, \ldots, \alpha_s] := \left\{
 \begin{array}{l}
\alpha \, : \begin{array}{l}\, \alpha \in  \mathcal{S}_{\Gamma}^* (A_i) \; \ \mbox{and}\\ \alpha^2 \in  \left( \mathcal{S}_{\Gamma,2}^* (A_i) \cap \mathcal{S}_{\Gamma,2}^* (A_j) \right) \cup \left( \mathcal{S}_{\Gamma,2}^* (A_i) \cap \mathcal{S}_{\Gamma,2}^{*,neg} (A_j) \right) \; \mbox{for some $i \ne j$}.\end{array} \\
\mbox{If $\alpha \in  \mathcal{S}_{\Gamma}^* (A_i)$\ for exactly $\ell$ indices $i = 1, \ldots, q$, the multiplicity of $\alpha$ is $\ell$}
\end{array} \right\}
 $$
and
$$
 [\mu_1, \ldots, \mu_t] := \left\{
 \begin{array}{l}
\mu \, : \,  \mu \in  \mathcal{S}_{H}^* (A_i) \cap \mathcal{S}_{H}^* (A_j) \; \mbox{for some $i \ne j$}.\\
\mbox{If $\mu \in  \mathcal{S}_{H}^* (A_i)$ for exactly $\ell$ indices $i = 1, \ldots, q$, the multiplicity of $\mu$ is $\ell$}
\end{array}\  \right\}.
$$
From these multisets, we define, for each $k\in \mathbb{N}$, the following multisets
\begin{align*}
\mathcal{A}_k &:= \left[\alpha_1 \exp \left(\frac{\ii}{k}\right), \ldots, \alpha_s \exp\left( \frac{\ii}{sk} \right) \right], \\ \mathcal{M}_k & :=\left[\mu_1 \exp\left(\frac{\ii}{k}\right), \ldots, \mu_t \exp\left(\frac{\ii}{tk}\right)\right].
\end{align*}
Then, for $k$ large enough, all the elements in  $\mathcal{A}_k$ are distinct and all the elements of $\mathcal{M}_k$ are  distinct as well. To see this for $\mathcal{A}_k$, note first that, if $\alpha_{j}=\alpha_{j'}$, for $1\leq j\neq j'\leq s$, then $\alpha_{j}\exp(\ii/(jk))\neq\alpha_{j'}\exp(\ii/(j'k))$, for all $k$. If $\alpha_{j}\neq\alpha_{j'}$, then $\alpha_{j}\exp(\ii/(jk))=\alpha_{j'}\exp(\ii/(j'k))$ if and only if $\alpha_j/\alpha_{j'}=\exp((\ii/k)\cdot(1/j'-1/j))$. If we set $\alpha_j=e^{\ii\theta_j}$ and $\alpha_{j'} =e^{\ii\theta_{j'}}$, for $0\leq\theta_j,\theta_{j'}<2\pi$, this is equivalent to $\theta_j-\theta_{j'}=(1/k)\cdot(1/j'-1/j)$. Taking $k$ large enough so that $(1/k)\cdot|1/j'-1/j|<|\theta_j-\theta_{j'}|$, the last identity does not hold. The reasoning for $\mathcal{M}_k$ is similar.

Next, let $A_i = P_i G_i P_i^*$ with $G_i$ being the $^*$CFC of $A_i$ and $P_i$ invertible for $i= 1, \ldots q$, and define $q$ sequences of matrices
$$
\{A_1^{(k)}\}_{k \in \mathbb{N}} := \{ P_1 G_1^{(k)} P_1^* \}_{k \in \mathbb{N}} \, , \; \ldots \; ,
\{A_q^{(k)}\}_{k \in \mathbb{N}} := \{ P_q G_q^{(k)} P_q^* \}_{k \in \mathbb{N}} \, ,
$$
where $G_1^{(k)}, \ldots , G_q^{(k)}$ are obtained from $G_1, \ldots , G_q$ by replacing the parameters $[\alpha_1, \cdots, \alpha_s]$ and $[\mu_1, \cdots, \mu_t]$ by those in $\mathcal{A}_k$ and $\mathcal{M}_k$, respectively, and keeping the other elements in $\mathcal{S}_\Gamma^* (G_1), \ldots ,$ $\mathcal{S}_\Gamma^* (G_q)$ and  $\mathcal{S}_H^* (G_1), \ldots ,  \mathcal{S}_H^* (G_q)$ unchanged. Note that:
\begin{enumerate}
    \item $A_i^{(k)} \in \bunh (C_i)$, for $i=1, \ldots , q$, and for $k$ large enough,
    \item $\mathcal{S}_{\Gamma,2}^* (A_i^{(k)}) \cap \mathcal{S}_{\Gamma,2}^* (A_j^{(k)}) = \emptyset$, $\mathcal{S}_{\Gamma,2}^* (A_i^{(k)}) \cap \mathcal{S}_{\Gamma,2}^{*,neg} (A_j^{(k)}) = \emptyset$, and $\mathcal{S}_{H}^* (A_i^{(k)}) \cap \mathcal{S}_{H}^* (A_j^{(k)}) = \emptyset$ for $i \ne j$, $i,j=1, \ldots , q$, and for $k$ large enough.
\end{enumerate}
Thus, by Case 1,  $A_1^{(k)} \oplus \cdots \oplus A_q^{(k)} \in \bunh (C_1 \oplus \cdots \oplus C_q)$ for $k$ large enough, and, since,
$$
\lim_{k\rightarrow \infty}  A_1^{(k)} \oplus \cdots \oplus A_q^{(k)} = A_1 \oplus \cdots \oplus A_q,
$$
we get that $A_1 \oplus \cdots \oplus A_q \in \cbunh (C_1 \oplus \cdots \oplus C_q).$
\end{proof}

Lemma \ref{lemm.dirsumbund2} is more general than Lemma \ref{lemm.dirsumbund1} and follows from it. The only difference is in condition (1), where we replace $\bun^*(C_i)$ by $\overline{\bun^*}(C_i)$. Lemma \ref{lemm.dirsumbund2} is the one that will be used in the proof of part (b) in Theorem \ref{main_th}.

\begin{lemma} \label{lemm.dirsumbund2} If $A_1 \in \C^{p_1 \times p_1} , \ldots , A_q \in \C^{p_q \times p_q}$ and $C_1 \in \C^{p_1 \times p_1} , \ldots , C_q \in \C^{p_q \times p_q}$ satisfy
\begin{enumerate}
    \item[\rm (1)] $A_i \in \cbunh (C_i)$, for $i=1, \ldots , q$,

    \item[\rm (2)] $\mathcal{S}_{\Gamma,2}^* (C_i) \cap \mathcal{S}_{\Gamma,2}^* (C_j) = \emptyset$ and
                  $\mathcal{S}_{\Gamma,2}^* (C_i) \cap \mathcal{S}_{\Gamma,2}^{*,neg} (C_j) = \emptyset$ for $i \ne j$, $i,j=1, \ldots q$,
    \item[\rm (3)] $\mathcal{S}_{H}^* (C_i) \cap \mathcal{S}_{H}^* (C_j) = \emptyset$ for $i \ne j$, $i,j=1, \ldots q$,
\end{enumerate}
 then $$A_1 \oplus \cdots \oplus A_q \in \cbunh (C_1 \oplus \cdots \oplus C_q).$$
\end{lemma}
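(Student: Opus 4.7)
The plan is to reduce Lemma \ref{lemm.dirsumbund2} to Lemma \ref{lemm.dirsumbund1} by a routine diagonal approximation argument, exploiting the fact that the only difference between the two statements is the replacement of $\bunh(C_i)$ in hypothesis (1) by its closure, while hypotheses (2) and (3) involve only the matrices $C_1, \ldots , C_q$ and are therefore unchanged.

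First, I would use hypothesis (1) to pick, for each $i = 1, \ldots , q$, a sequence $\{A_i^{(k)}\}_{k \in \mathbb{N}}$ of matrices in $\bunh(C_i)$ with $\lim_{k \to \infty} A_i^{(k)} = A_i$. Since hypotheses (2) and (3) involve only the $C_i$'s and are still assumed, Lemma \ref{lemm.dirsumbund1} applies to the tuple $(A_1^{(k)}, \ldots , A_q^{(k)})$ together with $(C_1, \ldots , C_q)$ for every $k \in \mathbb{N}$, yielding
\[
A_1^{(k)} \oplus \cdots \oplus A_q^{(k)} \in \cbunh(C_1 \oplus \cdots \oplus C_q), \qquad \text{for all } k \in \mathbb{N}.
\]

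Next, since the direct sum map $(X_1, \ldots , X_q) \mapsto X_1 \oplus \cdots \oplus X_q$ from $\C^{p_1 \times p_1} \times \cdots \times \C^{p_q \times p_q}$ to $\C^{n \times n}$ (where $n = p_1 + \cdots + p_q$) is continuous in the standard topology, convergence of each component implies
\[
\lim_{k \to \infty} \bigl(A_1^{(k)} \oplus \cdots \oplus A_q^{(k)}\bigr) = A_1 \oplus \cdots \oplus A_q.
\]
Because $\cbunh(C_1 \oplus \cdots \oplus C_q)$ is closed by definition, the limit of a sequence of its elements lies in the same set, and we conclude that $A_1 \oplus \cdots \oplus A_q \in \cbunh(C_1 \oplus \cdots \oplus C_q)$.

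There is no real obstacle in this argument; the technical work has already been done inside Lemma \ref{lemm.dirsumbund1}, where the careful perturbation of the parameters $\alpha_i$ and $\mu_i$ ensured the disjointness conditions needed to place the direct sum into the bundle of $C_1 \oplus \cdots \oplus C_q$. All that remains here is to iterate that construction along the approximating sequences $\{A_i^{(k)}\}_k$ and invoke the closedness of $\cbunh(C_1 \oplus \cdots \oplus C_q)$. In particular, it is not necessary to reprove anything analogous to Case~2 of Lemma \ref{lemm.dirsumbund1}, since that case is already absorbed into the conclusion of Lemma \ref{lemm.dirsumbund1}, which provides membership in the \emph{closure} of the bundle rather than in the bundle itself.
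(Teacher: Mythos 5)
Your proof is correct and follows essentially the same route as the paper: extract sequences $\{A_i^{(k)}\}_k \subset \bunh(C_i)$ converging to $A_i$ via hypothesis (1), apply Lemma \ref{lemm.dirsumbund1} to each tuple $(A_1^{(k)},\ldots,A_q^{(k)})$ to land in $\cbunh(C_1\oplus\cdots\oplus C_q)$, and conclude by taking the limit and using closedness. No gaps, and no meaningful divergence from the paper's argument.
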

\begin{proof}
Condition (1) ensures that there exist $q$ sequences of matrices
$
\{A_1^{(k)}\}_{k \in \mathbb{N}} \subset \bunh (C_1) ,  \ldots ,$
$
\{A_q^{(k)}\}_{k \in \mathbb{N}} \subset \bunh (C_q)
$
such that $\lim_{k\rightarrow \infty} A_1^{(k)} = A_1, \ldots , \lim_{k\rightarrow \infty} A_q^{(k)} = A_q$. Lemma \ref{lemm.dirsumbund1} applied to
$
A_1^{(k)} ,  \ldots , A_q^{(k)}
$
implies that $A_1^{(k)} \oplus \cdots \oplus A_q^{(k)} \in \cbunh (C_1 \oplus \cdots \oplus C_q)$ for all $k\in \mathbb{N}$. So,
$$
A_1 \oplus \cdots \oplus A_q  = \lim_{k\rightarrow \infty} (A_1^{(k)} \oplus \cdots \oplus A_q^{(k)}) \in \cbunh (C_1 \oplus \cdots \oplus C_q).
$$
\end{proof}

Lemmas \ref{lemm.dirsumbund3} and  \ref{lemm.dirsumbund4} for congruence bundles are simpler than Lemmas \ref{lemm.dirsumbund1} and  \ref{lemm.dirsumbund2} for $^*$congru\-ence bundles because the CFC in Corollary \ref{coro.CFC} does not involve parameters associated to the Type I blocks.
\begin{lemma} \label{lemm.dirsumbund3} If $A_1 \in \C^{p_1 \times p_1} , \ldots , A_q \in \C^{p_q \times p_q}$ and $C_1 \in \C^{p_1 \times p_1} , \ldots , C_q \in \C^{p_q \times p_q}$ satisfy
\begin{enumerate}
    \item[\rm (1)] $A_i \in \bunc (C_i)$, for $i=1, \ldots , q$,

    \item[\rm (2)] $\mathcal{S}_{H}^\top (C_i) \cap \mathcal{S}_{H}^\top (C_j) = \emptyset$ for $i \ne j$, $i,j=1, \ldots q$,
\end{enumerate}
then $$A_1 \oplus \cdots \oplus A_q \in \cbunc (C_1 \oplus \cdots \oplus C_q).$$
Moreover, if $A_1, \ldots , A_q$ satisfy the conditions {\rm (2)} (with $A_i,A_j$ instead of $C_i,C_j$), then $A_1 \oplus \cdots \oplus A_q \in \bunc (C_1 \oplus \cdots \oplus C_q)$.
\end{lemma}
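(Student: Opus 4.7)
The plan is to mirror the two-case structure of the proof of Lemma \ref{lemm.dirsumbund1}, exploiting the fact that, by Corollary \ref{coro.CFC}, the only free parameters in the CFC are the values $\mu$ associated with Type II-(a) blocks. Consequently, the only obstruction to combining the individual CFCs of the $A_i$'s into a CFC of $A_1\oplus\cdots\oplus A_q$ compatible with Definition \ref{bundlec_def} is the potential overlap of the sets $\mathcal{S}_H^\top(A_i)$; there is no analog of the $\alpha^2$/$-\alpha^2$ bookkeeping that appeared in the $^*$congruence case.

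First I would handle the \emph{moreover} statement (call it Case 1), where the $A_i$ themselves satisfy condition (2). Since the CFC of a direct sum is the direct sum of the individual CFCs, hypothesis (1) implies that the CFC of $A_1\oplus\cdots\oplus A_q$ has exactly the same block structure as that of $C_1\oplus\cdots\oplus C_q$, differing only in the values $\widetilde\mu_i$ of the Type II-(a) parameters. Within each block $A_i$, the required biconditional $\widetilde\mu_i\neq\widetilde\mu_j\Leftrightarrow\mu_i\neq\mu_j$ follows from $A_i\in\bunc(C_i)$, and across different indices the disjointness of both $\mathcal{S}_H^\top(C_i)$ and $\mathcal{S}_H^\top(A_i)$ makes both sides of the biconditional simultaneously true. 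Hence $A_1\oplus\cdots\oplus A_q\in\bunc(C_1\oplus\cdots\oplus C_q)$.

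For the general case (Case 2), I would write $A_i=P_iG_iP_i^\top$, where $G_i$ is a direct sum as in the right-hand side of \eqref{bundlec} for $A_i\in\bunc(C_i)$, and collect into a multiset $[\mu_1,\ldots,\mu_t]$ those Type II-(a) parameters that appear in $\mathcal{S}_H^\top(A_i)\cap\mathcal{S}_H^\top(A_j)$ for some $i\neq j$. Exactly as in the proof of Lemma \ref{lemm.dirsumbund1}, I would perturb them into $\mathcal{M}_k:=[\mu_1\exp(\ii/k),\ldots,\mu_t\exp(\ii/(tk))]$ and define $A_i^{(k)}:=P_iG_i^{(k)}P_i^\top$ with $G_i^{(k)}$ obtained by substituting these perturbed values. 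For $k$ large enough the perturbed parameters are pairwise distinct and remain within the Type II-(a) region ($|\mu|>1$ or $\mu=e^{\ii\theta}$ with $0<\theta<\pi$), so $A_i^{(k)}\in\bunc(C_i)$ and the multisets $\mathcal{S}_H^\top(A_i^{(k)})$ are pairwise disjoint. Case 1 then gives $A_1^{(k)}\oplus\cdots\oplus A_q^{(k)}\in\bunc(C_1\oplus\cdots\oplus C_q)$ for large $k$, and letting $k\to\infty$ yields $A_1\oplus\cdots\oplus A_q\in\cbunc(C_1\oplus\cdots\oplus C_q)$.

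The only delicate point I anticipate is verifying that the perturbation keeps us inside the Type II-(a) parameter region and does not accidentally merge perturbed parameters with other fixed ones of the CFC; both issues are resolved by choosing $k$ large enough, exactly as in the proof of Lemma \ref{lemm.dirsumbund1}, and the argument is genuinely shorter here because there are no Type I parameters to perturb.
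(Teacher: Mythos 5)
Your proposal is correct and follows essentially the same route as the paper, which proves this lemma by repeating the two-case argument of Lemma \ref{lemm.dirsumbund1} with the single multiset $[\mu_1,\ldots,\mu_t]$ of overlapping Type II-(a) parameters, perturbed by factors $\exp(\ii/(jk))$. Your added remarks on staying inside the Type II-(a) region and avoiding collisions with the unperturbed parameters for $k$ large enough are exactly the details the paper leaves implicit.
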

\begin{proof}
The proof is similar to that of Lemma \ref{lemm.dirsumbund1} with the simplification that in Case 2 there is only one multiset involved, instead of two. Such a multiset is
$$
[\mu_1, \ldots, \mu_t] := \left\{\begin{array}{l}\mu \, : \,
 \begin{array}{l}
\mu \in  \mathcal{S}_{H}^\top (A_i) \cap \mathcal{S}_{H}^\top (A_j) \; \mbox{for some $i \ne j$}. \\
\end{array}\\
\mbox{If $\mu\in \mathcal{S}_{H}^\top (A_i)$ for exactly $\ell$ indices $i = 1, \ldots, q$, the multiplicity of $\mu$ is $\ell$}\end{array}\right\}.
$$
The rest of the details are omitted.
\end{proof}

The proof of Lemma \ref{lemm.dirsumbund4} is exactly as that of Lemma \ref{lemm.dirsumbund2}, with the obvious change of using Lemma \ref{lemm.dirsumbund3} instead of Lemma \ref{lemm.dirsumbund1}. Therefore, it is omitted.
\begin{lemma} \label{lemm.dirsumbund4} If $A_1 \in \C^{p_1 \times p_1} , \ldots , A_q \in \C^{p_q \times p_q}$ and $C_1 \in \C^{p_1 \times p_1} , \ldots , C_q \in \C^{p_q \times p_q}$ satisfy
\begin{enumerate}
    \item[\rm (1)] $A_i \in \cbunc (C_i)$, for $i=1, \ldots , q$,

    \item[\rm (2)] $\mathcal{S}_{H}^\top (C_i) \cap \mathcal{S}_{H}^\top (C_j) = \emptyset$ for $i \ne j$, $i,j=1, \ldots q$,
\end{enumerate}
then $$A_1 \oplus \cdots \oplus A_q \in \cbunc (C_1 \oplus \cdots \oplus C_q).$$
\end{lemma}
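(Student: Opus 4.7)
The plan is to mirror exactly the argument of Lemma \ref{lemm.dirsumbund2}, simply invoking Lemma \ref{lemm.dirsumbund3} instead of Lemma \ref{lemm.dirsumbund1} at the key step. The hypothesis $A_i\in\cbunc(C_i)$ means, by definition of closure in the standard topology of $\C^{p_i\times p_i}$, that for each $i=1,\ldots,q$ there exists a sequence $\{A_i^{(k)}\}_{k\in\mathbb{N}}\subset \bunc(C_i)$ with $\lim_{k\to\infty} A_i^{(k)}=A_i$.

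Next, for each fixed $k\in\mathbb{N}$, I would apply Lemma \ref{lemm.dirsumbund3} to the tuple $A_1^{(k)},\ldots,A_q^{(k)}$ together with $C_1,\ldots,C_q$. Condition (1) of Lemma \ref{lemm.dirsumbund3} is immediate from the construction of the sequences, while condition (2), namely $\mathcal{S}_H^\top(C_i)\cap \mathcal{S}_H^\top(C_j)=\emptyset$ for $i\ne j$, is precisely the hypothesis of the present lemma. Hence Lemma \ref{lemm.dirsumbund3} yields
\begin{equation*}
    A_1^{(k)}\oplus\cdots\oplus A_q^{(k)}\in \cbunc(C_1\oplus\cdots\oplus C_q),\qquad \text{for every } k\in\mathbb{N}.
\end{equation*}

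Finally, since the direct sum operation is continuous (in each block-diagonal entry) and the closure $\cbunc(C_1\oplus\cdots\oplus C_q)$ is, by definition, a closed subset of $\C^{n\times n}$ with $n=p_1+\cdots+p_q$, I would take the componentwise limit as $k\to\infty$ to obtain
\begin{equation*}
    A_1\oplus\cdots\oplus A_q=\lim_{k\to\infty}\bigl(A_1^{(k)}\oplus\cdots\oplus A_q^{(k)}\bigr)\in \cbunc(C_1\oplus\cdots\oplus C_q),
\end{equation*}
which is the desired conclusion. There is no real obstacle here: once Lemma \ref{lemm.dirsumbund3} is available, the only subtlety (which is mild) is to notice that condition (2) concerns only the parameters attached to the $C_i$, so it remains valid when the $A_i$ are replaced by the approximating sequences $A_i^{(k)}$; the argument is then a clean two-layer limit passage identical in structure to Lemma \ref{lemm.dirsumbund2}.
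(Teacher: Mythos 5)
Your argument is correct and coincides with the paper's intended proof: the paper states that Lemma \ref{lemm.dirsumbund4} is proved exactly as Lemma \ref{lemm.dirsumbund2}, replacing Lemma \ref{lemm.dirsumbund1} by Lemma \ref{lemm.dirsumbund3}, which is precisely the two-layer limit argument you carry out.
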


\medskip

Lemma \ref{decomp_lem} is another key ingredient of the proof of the main Theorem \ref{main_th}. In Lemma \ref{decomp_lem}, we show that, for every Type I and Type II blocks in Theorem \ref{hs_th}, there are arbitrarily small perturbations that make them $^\star$congruent to either a direct sum of Type II blocks with size $2\times 2$ and with all their parameters distinct, or to a direct sum of this kind of blocks plus an additional Type I block with size $1\times 1$. For the sake of brevity, in some of the following proofs, if $z\in\C$, we denote by $z^\star$ either $\overline z$ or $z$ depending on whether $\star=*$ or $\star=\top$, respectively.
\begin{lemma}\label{decomp_lem}
    \begin{itemize}
        \item[\rm(i)] Let $\alpha \in \C$ be such that $|\alpha| = 1$ if $\star = *$, or $\alpha = 1$ if $\star = \top$, and let $k>1$. Then $\alpha\Gamma_k\in\overline\bunstar(M)$, where
        \begin{equation*}
        M=\left\{\begin{array}{lc}
             \bigoplus_{i=1}^{k/2}H_2(\mu_i)&\mbox{if $k$ is even,}  \\[0.2cm]
             \bigoplus_{i=1}^{\lfloor k/2\rfloor}H_2(\mu_i)\oplus\alpha &\mbox{if $k$ is odd},
        \end{array}\right.
        \end{equation*}
        with  $\mu_i\neq\mu_{i'}$ for $i\neq i'$, and $|\mu_i| >1$ if $\star = *$, or $\mu_i$ satisfying the conditions in {\rm Type II-(a)} blocks of Corollary {\rm\ref{coro.CFC}} if $\star = \top$, for all $i, i'=1,\hdots,\lfloor\frac{k}{2}\rfloor$.

        \item[\rm(ii)] Let $\mu\in\C$ be such that $|\mu|>1$ if $\star =*$, or $0\ne \mu \ne (-1)^{k+1}$ if $\star =\top$. Then $H_{2k}(\mu)\in\overline\bunstar\left(\bigoplus_{i=1}^kH_2(\mu_i)\right)$, with  $\mu_i\neq\mu_{i'}$ for $i\neq i'$, and $|\mu_i| >1$ if $\star = *$, or $\mu_i$ satisfying the conditions in {\rm Type II-(a)} blocks of Corollary {\rm\ref{coro.CFC}} if $\star = \top$, for all $i, i'=1,\hdots,k$.
    \end{itemize}
\end{lemma}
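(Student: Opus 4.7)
The strategy for both parts is to construct, for each small $\epsilon>0$, a matrix $A_\epsilon\in\bunstar(M)$ such that $A_\epsilon\to\alpha\Gamma_k$ (resp.\ $A_\epsilon\to H_{2k}(\mu)$) as $\epsilon\to 0$; this immediately gives membership in $\cbunstar(M)$.

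For part (ii) the construction is direct, exploiting the anti-block-diagonal form of $H_{2k}(\mu)$. Pick distinct $\widetilde\mu_1,\ldots,\widetilde\mu_k$ satisfying the prescribed Type II constraints and tending to $\mu$; when $\star=\top$ and $\mu=(-1)^k$, the $\widetilde\mu_i$ are chosen radially with $|\widetilde\mu_i|>1$, which is always possible. Let $D_\epsilon$ be a diagonalizable perturbation of $J_k(\mu)$ with eigenvalues $\widetilde\mu_i$ (for instance $D_\epsilon:=J_k(\mu)+\epsilon\,\diag(\zeta_1,\ldots,\zeta_k)$ with distinct $\zeta_i$). Writing $S_\epsilon^{-1}D_\epsilon S_\epsilon=\diag(\widetilde\mu_i)$, set
$$
P_\epsilon := \begin{bmatrix} S_\epsilon & 0 \\ 0 & S_\epsilon^{-\star}\end{bmatrix}, \qquad A_\epsilon := \begin{bmatrix} 0 & I_k \\ D_\epsilon & 0 \end{bmatrix}.
$$
A short calculation yields
$$
P_\epsilon^\star A_\epsilon P_\epsilon \;=\; \begin{bmatrix} 0 & I_k \\ \diag(\widetilde\mu_i) & 0\end{bmatrix},
$$
which, after the permutation congruence reordering the basis as $(1,k+1,2,k+2,\ldots,k,2k)$, equals $\bigoplus_{i=1}^k H_2(\widetilde\mu_i)\in\bunstar(\bigoplus_{i=1}^k H_2(\mu_i))$. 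Hence $A_\epsilon$ belongs to the bundle, and $A_\epsilon\to H_{2k}(\mu)$.

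For part (i) the plan uses the bijection between the $^\star$CFC of $A$ and the Kronecker canonical form (KCF) of the palindromic pencil $A+\la A^\star$, as provided in \cite[Th.\ 4]{d16} for $\star=\top$ and \cite[Th.\ 2.13-(b)]{dd25} for $\star=*$. Under this bijection, $\alpha\Gamma_k$ corresponds to a single size-$k$ Jordan block of the pencil at a unit-modulus eigenvalue $\nu$ (depending on $\alpha$ and $k$); an $H_2(\widetilde\mu)$ block corresponds to a pair of simple $\star$-reciprocal eigenvalues off the unit circle; and a $1\times 1$ Type I block $\widetilde\alpha$ corresponds to a single simple eigenvalue on the unit circle. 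The claim reduces to producing a one-parameter family of $\star$-palindromic perturbations of $\alpha\Gamma_k+\la(\alpha\Gamma_k)^\star$ whose KCFs consist of $k$ distinct simple eigenvalues: grouped into $\lfloor k/2\rfloor$ $\star$-reciprocal pairs off the unit circle (each contributing an $H_2(\widetilde\mu_i)$ block) plus, when $k$ is odd, one surviving unit-modulus eigenvalue whose phase corresponds under the bijection to $\widetilde\alpha$ with $\widetilde\alpha\to\alpha$. Such perturbations can be constructed in a basis adapted to the Jordan chain of the pencil by introducing small $\star$-palindromic entries that split the chain symmetrically under the reciprocation $\nu\mapsto 1/\nu^\star$.

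The main obstacle is precisely this explicit $\star$-palindromic splitting: one must verify (a) that the perturbation lies in the $\star$-palindromic subspace, (b) that the resulting eigenvalues distribute in exactly the prescribed $\star$-reciprocal pattern without accidental coincidences, and (c) in the odd case, that the surviving unit-modulus eigenvalue corresponds via the bijection to $\widetilde\alpha$ converging to $\alpha$. Once this is established, the perturbed $A_\epsilon$ has $^\star$CFC of the form $M$ with the prescribed distinctness constraints, so $A_\epsilon\in\bunstar(M)$ by Definitions \ref{bundlec_def}--\ref{bundle*_def}, and $A_\epsilon\to\alpha\Gamma_k$ completes the argument.
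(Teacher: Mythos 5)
Your part (ii) is a correct, self-contained argument and takes a genuinely different route from the paper. The paper perturbs $H_{2k}(\mu)$ by a specific diagonal perturbation, passes to the $\star$-palindromic pencil $A_\epsilon+\la A_\epsilon^\star$, computes its eigenvalues explicitly, and invokes the bijection between the KCF of the pencil and the $^\star$CFC (\cite[Th. 4]{d16}, \cite[Th. 2.13-(b)]{dd25}). You instead exhibit the $\star$-congruence directly: the transformation $P_\epsilon=S_\epsilon\oplus S_\epsilon^{-\star}$ conjugates the off-diagonal block $D_\epsilon=S_\epsilon\diag(\widetilde\mu_i)S_\epsilon^{-1}$ to diagonal form, and a permutation congruence separates the $2\times 2$ blocks. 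This is more elementary in that it bypasses the palindromic-pencil machinery entirely. One small gap worth closing: when $\star=\top$ you handle $\mu=(-1)^k$, but for general $0\ne\mu\ne(-1)^{k+1}$ you should say explicitly (as the paper does) that you may first normalize $\mu$ via the $\mu\leftrightarrow\mu^{-1}$ indeterminacy of Theorem \ref{hs_th}-(a) so that it satisfies the Type II-(a) constraints or equals $(-1)^k$; otherwise the limit $\widetilde\mu_i\to\mu$ might not sit in the admissible region.

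Part (i), however, is not a proof. You correctly identify the framework (palindromic pencil, KCF bijection, splitting the size-$k$ Jordan block into $\star$-reciprocal simple pairs plus possibly one surviving unimodular eigenvalue) and you correctly identify the obstacles (a), (b), (c). But you then write "Once this is established\ldots" without establishing it. That step is the content of the lemma. The paper resolves it by writing down explicit perturbations $\Gamma_k(\varepsilon_1,\ldots,\varepsilon_k)$ (for $k$ even) and $\widehat\Gamma_k(\varepsilon_1,\ldots,\varepsilon_{k-1})$ (for $k$ odd) of the antidiagonal of $\Gamma_k$, chosen so that the perturbed matrix $\alpha(\Gamma_k+\Gamma_k(\vec\varepsilon))$ is an actual matrix (not just an abstract palindromic pencil perturbation) and so that the resulting antitriangular pencil $L_{\alpha,k}(\la)$ has its eigenvalues readable off the antidiagonal: $\la_j=\alpha(1+\varepsilon_{2j})/(\alpha^\star(1+\varepsilon_{2j-1}))$ paired with $1/\la_j^\star$, together with $-\alpha/\alpha^\star$ in the odd case, all simple for a generic choice such as $\varepsilon_j=j\delta$. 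Without some such concrete construction, items (a)--(c) on your list remain unverified, and in particular (b) (no accidental coincidences, and all non-unimodular eigenvalues truly leave the unit circle rather than merely splitting on it) is not automatic from the abstract picture. You would need to supply an explicit $A_\epsilon$ and compute, or cite a versal-deformation/stratification result that guarantees the existence of such a splitting inside the palindromic structure; as written, the argument stops exactly where the work begins.
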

\begin{proof}
(i) Assume first that $k$ is even. Let $\varepsilon_1,\varepsilon_2,\cdots,\varepsilon_k$ be positive real numbers and set
$$
\Gamma_k(\varepsilon_1,\varepsilon_2,\hdots,\varepsilon_k):=\begin{bmatrix}
    &&&&&&&&-\varepsilon_2\\
    &&&&&&&\varepsilon_4\\
    &&&&&&\iddots&&\\
    &&&&(-1)^{k/2}\varepsilon_k&&&\\
    &&&(-1)^{k/2-1}\varepsilon_{k-1}\\
    &&\iddots&&\\
    &-\varepsilon_3\\
    \varepsilon_1
\end{bmatrix}.
$$
Let us consider the $\star$-palindromic matrix pencil
$$
\begin{array}{ccl}
L_{\alpha,k}(\la)&:=&\alpha(\Gamma_k+\Gamma_k(\varepsilon_1,\varepsilon_2,\hdots,\varepsilon_k))+\la \,(\alpha(\Gamma_k+\Gamma_k(\varepsilon_1,\varepsilon_2,\hdots,\varepsilon_k)))^\star\\[0.2cm]
&=&\begin{bmatrix}
    &&&&L_{1,k}(\la)\\
    &&&L_{2,k-1}(\la)&\alpha+\alpha^\star\la\\
    &&\iddots&\iddots&\\
    &L_{k-1,2}(\la)&-\alpha-\alpha^\star\la\\
    L_{k,1}(\la)&\alpha+\alpha^\star\la
\end{bmatrix}_{k\times k} ,
\end{array}
$$
where $L_{j,k-j+1}(\la)=(-1)^j\alpha(1+\varepsilon_{2j})+(-1)^{j-1}\la\alpha^\star(1+\varepsilon_{2j-1})$, for $j=1,\hdots,k/2$, and $L_{j,k-j+1}(\la)=(-1)^{k-j}\alpha(1+\varepsilon_{2(k-j)+1})+(-1)^{k-j+1}\la\alpha^\star(1+\varepsilon_{2(k-j+1)})$, for $j=k/2+1,\hdots,k$. Note that $\la L^\star_{j,k-j+1}(1/\la)=L_{k-j+1,j}(\la)$, for $j=1,\hdots,k/2$. Hence, the eigenvalues of $L(\la)$ are $\la_j =\alpha(1+\varepsilon_{2j})/(\alpha^\star(1+\varepsilon_{2j-1}))$ and $1/\la_j^\star=\alpha(1+\varepsilon_{2j-1})/(\alpha^\star(1+ \varepsilon_{2j}))$, for $j=1,\hdots,k/2$. If $\varepsilon_1,\hdots,\varepsilon_k$ are such that
$(1+\varepsilon_{2i})/(1+\varepsilon_{2i-1})\neq(1+\varepsilon_{2j})/(1+\varepsilon_{2j-1})$, for $i\neq j$, and $(1+\varepsilon_{2i})/(1+\varepsilon_{2i-1}) >1$, for $i,j = 1,\ldots , k/2$ (this holds, for instance, if $\varepsilon_j = j \delta$ for any $\delta >0$), then these eigenvalues are all distinct numbers with absolute values different from $1$, so the KCF of $L_{\alpha,k}(\la)$ is $\bigoplus_{i=1}^{k/2}(\la-\la_i)\oplus\bigoplus_{i=1}^{k/2}(\la-1/\la_i^\star)$. This implies, by \cite[Th. 4]{d16} if $\star = \top$ or by \cite[Th. 2.13-(b)]{dd25} if $\star = *$, that the $^\star$congruence canonical form of $\alpha(\Gamma_k+\Gamma_k(\varepsilon_1,\varepsilon_2,\hdots,\varepsilon_k))$ is $\bigoplus_{i=1}^{k/2}H_2(-\la_i)$. As a consequence, $\alpha(\Gamma_k+\Gamma_k(\varepsilon_1,\varepsilon_2,\hdots,\varepsilon_k))\in\bunstar\left(\bigoplus_{i=1}^{k/2}H_2(\mu_i)\right)$, with $\mu_i$ as in the statement. Since $\Gamma_k+\Gamma_k(\varepsilon_1,\varepsilon_2,\hdots,\varepsilon_k)$ converges to $\Gamma_k$ as $\varepsilon_1,\varepsilon_2,\hdots,\varepsilon_k$ tend to $0$, we conclude that $\alpha\Gamma_k\in\cbunstar\left(\bigoplus_{i=1}^{k/2}H_2(\mu_i)\right)$, as desired.

\medskip

Now, assume that $k>1$ is odd, and let $\varepsilon_1,\hdots,\varepsilon_{k-1}$ be positive real numbers, and set
$$
\widehat\Gamma_k(\varepsilon_1,\varepsilon_2,\hdots,\varepsilon_{k-1}):=\begin{bmatrix}
    &&&&&&&&&-\varepsilon_2\\
   &&&&&&&& \varepsilon_4\\
    &&&&&&&\iddots&&\\
    &&&&&(-1)^{(k-1)/2}\varepsilon_{k-1}&&&\\&&&&0&&&\\
    &&&(-1)^{(k-1)/2-1}\varepsilon_{k-2}\\
    &&\iddots&&\\
    &-\varepsilon_3\\
    \varepsilon_1
\end{bmatrix}.
$$
Consider the $\star$-palindromic pencil $\widehat L_{\alpha,k}(\la):=\alpha(\Gamma_k+ \widehat\Gamma_k(\varepsilon_1,\varepsilon_2,\hdots,\varepsilon_{k-1}))+\la(\alpha(\Gamma_k+\widehat\Gamma_k(\varepsilon_1,\varepsilon_2,\hdots,\varepsilon_{k-1})))^\star$, which is of the form
$$
\widehat L_{\alpha,k}(\la)=\begin{bmatrix}
    &&&&\widehat L_{1,k}(\la)\\
    &&&\widehat L_{2,k-1}(\la)&-\alpha+\alpha^\star\la\\
    &&\iddots&\iddots&\\
    &\widehat L_{k-1,2}(\la)&-\alpha+\alpha^\star\la\\
    \widehat L_{k,1}(\la)&\alpha-\alpha^\star\la
\end{bmatrix}_{k\times k},
$$
where $\widehat L_{j,k-j+1}(\la)=(-1)^j\alpha(-1+\varepsilon_{2j})+(-1)^{j-1}\la\alpha^\star(1+\varepsilon_{2j-1})$, for $j=1,\hdots,(k-1)/2$, $\widehat L_{j,k-j+1}(\la)=(-1)^{k-j}\alpha(1+\varepsilon_{2(k-j)+1})+(-1)^{k-j+1}\la\alpha^\star(-1+\varepsilon_{2(k-j+1)})$, for $j=(k+1)/2+1,\hdots,k$, and $\widehat L_{(k+1)/2,(k+1)/2}(\la)= (-1)^{(k-1)/2} (\alpha+\alpha^\star\la)$. Again, $\la \widehat L^\star_{j,k-j+1}(1/\la)=\widehat L_{k-j+1,j}(\la)$, for $j=1,\hdots,(k+1)/2$. Then, the eigenvalues of $\widehat L_{\alpha,k}(\la)$ are $\la_j=\alpha(-1+\varepsilon_{2j})/(\alpha^\star(1+\varepsilon_{2j-1}))$, for $j=1,\hdots,(k-1)/2$, together with $1/\la_j^\star$ and $-\alpha/\alpha^\star$. If $\varepsilon_1,\hdots,\varepsilon_{k-1}$ are such that $(-1+\varepsilon_{2i})/(1+\varepsilon_{2i-1})\neq(-1+\varepsilon_{2j})/(1+\varepsilon_{2j-1})$, for all $i\neq j$, and $0<|(-1+\varepsilon_{2i})/(1+\varepsilon_{2i-1})|<1$, for $i,j = 1, \ldots, (k-1)/2$ (this holds, for instance, if $\varepsilon_j = j \delta$ for any $1/k > \delta >0$), then these eigenvalues are all distinct numbers with absolute values different from $1$ except for the eigenvalue $-\alpha/\alpha^\star$. Therefore, the KCF of $\widehat L_{\alpha,k}(\la)$ is $\bigoplus_{i=1}^{(k-1)/2}(\la-\la_i)\oplus\bigoplus_{i=1}^{(k-1)/2}(\la-1/\la_i^\star)\oplus (\la + \alpha/\alpha^\star ) $. This implies, taking into account that $|1/\la^\star| >1$ and using again \cite[Th. 4]{d16} if $\star = \top$ or \cite[Th. 2.13-(b)]{dd25} if $\star = *$, that the $^\star$congruence canonical form of $\alpha(\Gamma_k+\widehat\Gamma_k(\varepsilon_1,\varepsilon_2,\hdots,\varepsilon_{k-1}))$ is $\bigoplus_{i=1}^{(k-1)/2}H_2(-1/\la_i^\star)\oplus \beta$, with $\beta = 1$ if $\star = \top$, or $|\beta| =1$ and $\beta / \overline \beta = \alpha/\overline \alpha$ if $\star = *$. Reasoning as in the case $k$ even and taking into account that the parameters in the matrices chosen to define a bundle are arbitrary except for the imposed constraints, we conclude that $\alpha\Gamma_k\in\cbunstar\left(\bigoplus_{i=1}^{(k-1)/2}H_2(\mu_i)\oplus\alpha\right)$ as desired.

\medskip

(ii) If $k=1$ the result is obvious. Therefore we assume that $ k>1$. If $\star = \top$, we assume without loss of generality that $\mu$ satisfies the properties in Type II-(a) blocks of Corollary \ref{coro.CFC} or $\mu = (-1)^k$. Thus, for $\star = \top$ or for $\star = *$, any of the considered $\mu$ can expressed as $\mu = |\mu| e^{\ii \phi}$, with $|\mu| \geq 1$ and $\phi \in \mathbb{R}$. Let $F_{2k}(\varepsilon_1,\hdots,\varepsilon_k):=\left[\begin{smallmatrix}
    0&0_{k\times k}\\D(\varepsilon_1,\hdots,\varepsilon_k)&0
\end{smallmatrix}\right]$, with $D(\varepsilon_1,\hdots,\varepsilon_k):=\diag(\varepsilon_1 e^{\ii \phi},\hdots,\varepsilon_k e^{\ii \phi})$ and $\varepsilon_1>\cdots>\varepsilon_k>0$. Then the $\star$-palindromic pencil $H(\la):=H_{2k}(\mu)+F_{2k}(\varepsilon_1,\hdots,\varepsilon_k)+\la(H_{2k}(\mu)+F_{2k}(\varepsilon_1,\hdots,\varepsilon_k))^\star$ has eigenvalues $\la_i=- (|\mu| + \varepsilon_i) \, e^{\ii \phi}$ and $1/\la_i^\star$, for $i=1,\hdots,k$. These $2k$ eigenvalues are all distinct complex numbers and $|\lambda_i| >1$ for $i=1, \ldots, k$, so the KCF of $H(\la)$ is $\bigoplus_{i=1}^k(\la-\la_i)\oplus\bigoplus_{i=1}^k(\la-1/\la_i^\star)$, and, by \cite[Th. 4]{d16} if $\star = \top$ or by \cite[Th. 2.13-(b)]{dd25} if $\star = *$, the $^\star$congruence canonical form of $H_{2k}(\mu)+F_{2k}(\varepsilon_1,\hdots,\varepsilon_k)$ is $\bigoplus_{i=1}^kH_{2}(-\la_i)$. This means that $H_{2k}(\mu)+F_{2k}(\varepsilon_1,\hdots,\varepsilon_k)\in\bunstar(\bigoplus_{i=1}^kH_{2}(\mu_i))$, with $\mu_i$ as in the statement. Since $H_{2k}(\mu)+F_{2k}(\varepsilon_1,\hdots,\varepsilon_k)$ converges to $H_{2k}(\mu)$ as $\varepsilon_1,\hdots,\varepsilon_k$ tend to $0$, we conclude that $H_{2k}(\mu)\in\cbunstar(\bigoplus_{i=1}^kH_{2}(\mu_i))$.
\end{proof}


\medskip

Lemma \ref{mod1evals_lem} is the last lemma of this section and is another key ingredient of the proof of the main Theorem \ref{main_th} for the case $\star = *$. Lemma \ref{mod1evals_lem} gives a lower bound on the number of unit eigenvalues of the $*$-palindromic pencil $A+\la A^*$ when $A$ is invertible and belongs to the closure of the bundle of a direct sum of Type II blocks with size $2\times 2$ and Type I blocks with size $1\times 1$ in the $^*$CFC of Theorem \ref{hs_th}-(b).

\begin{lemma}\label{mod1evals_lem}
Let $A\in\C^{n\times n}$ be invertible and $\ell$ be an integer such that  $0 \leq \ell \leq \lfloor n/2\rfloor$. If
$$
A\in\break\cbunh\left(\bigoplus_{i=1}^\ell H_2(\mu_i)\oplus\bigoplus_{j=1}^{n-2\ell}\alpha_j\right),
$$
with $|\mu_i| >1$, for $i=1, \hdots, \ell$, and $|\alpha_j|=1$, for $j=1,\hdots,n-2\ell$, then the pencil $A+\la A^*$ has, at least, $n-2\ell$ unit eigenvalues (counting multiplicities).
\end{lemma}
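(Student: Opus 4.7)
The plan is to reduce the problem, via the definition of closure of a bundle, to a direct eigenvalue computation on the block-diagonal pencil associated with the $^*$CFC generator, and then transfer the count of unit eigenvalues to $A$ itself by a continuity-of-roots argument that is valid precisely because $A$ is invertible. Setting $B := \bigoplus_{i=1}^\ell H_2(\mu_i) \oplus \bigoplus_{j=1}^{n-2\ell} \alpha_j$, the hypothesis yields a sequence $\{A_k\}_{k\in\mathbb{N}} \subset \bunh(B)$ with $A_k \to A$. By Definition \ref{bundle*_def}, each $A_k$ is $^*$congruent to a matrix $B_k$ of the same block structure but with possibly different parameters $\widetilde\mu_i^{(k)}$ and $\widetilde\alpha_j^{(k)}$ satisfying $|\widetilde\mu_i^{(k)}| > 1$ and $|\widetilde\alpha_j^{(k)}| = 1$; writing $A_k = P_k^* B_k P_k$ with $P_k$ invertible gives
$$A_k + \la A_k^* = P_k^*(B_k + \la B_k^*)P_k,$$
so $\det(A_k + \la A_k^*) = |\det P_k|^2 \, \det(B_k + \la B_k^*)$, which means $A_k + \la A_k^*$ and $B_k + \la B_k^*$ share the same eigenvalues with the same multiplicities.

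A direct calculation on $B_k + \la B_k^*$ block-by-block then shows that every $1 \times 1$ summand $\widetilde\alpha_j^{(k)}$ contributes the unit eigenvalue $-(\widetilde\alpha_j^{(k)})^2$ (which lies on the unit circle since $|\widetilde\alpha_j^{(k)}|=1$), while every summand $H_2(\widetilde\mu_i^{(k)})$ contributes the reciprocal pair $\{-\widetilde\mu_i^{(k)},\,-1/\overline{\widetilde\mu_i^{(k)}}\}$, both of modulus different from $1$ since $|\widetilde\mu_i^{(k)}| > 1$. Thus $A_k + \la A_k^*$ has exactly $n - 2\ell$ unit eigenvalues (counted with multiplicities) for each sufficiently large $k$.

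Finally, I would pass to the limit. Since $A$ is invertible and $A_k \to A$, for $k$ large enough $A_k$ is invertible too, and the polynomial $\det(A_k + \la A_k^*) \in \C[\la]$ has degree exactly $n$ with leading coefficient $\det(A_k^*) \to \det(A^*) \neq 0$; its coefficients converge to those of $\det(A + \la A^*)$. By the standard continuity of the roots of a polynomial as a function of its coefficients (when the leading coefficient stays bounded away from $0$), the $n$ eigenvalues of $A_k + \la A_k^*$ can be ordered so as to converge individually (with multiplicities) to the $n$ eigenvalues of $A + \la A^*$. In particular, the $n - 2\ell$ unit eigenvalues of $A_k + \la A_k^*$ have limits that remain in the closed unit circle, and these limits (counted with multiplicities) are eigenvalues of $A + \la A^*$, yielding the desired lower bound.

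The principal subtlety lies in this last step: one must ensure that the degree of the characteristic polynomial does not drop in the limit, which is exactly what the invertibility hypothesis on $A$ secures, and then invoke the closedness of the unit circle in $\C$ to prevent the unit eigenvalues of $A_k + \la A_k^*$ from escaping off the unit circle as $k \to \infty$. Everything else is a bookkeeping argument on the eigenvalues of the explicit canonical pencil $B_k + \la B_k^*$.
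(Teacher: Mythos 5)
Your proposal is correct and follows essentially the same route as the paper: take a sequence $A_k\to A$ in the bundle, read off the eigenvalues $-(\widetilde\alpha_j^{(k)})^2$ (unit modulus) and $-\widetilde\mu_i^{(k)}$, $-1/\overline{\widetilde\mu_i^{(k)}}$ of the $^*$congruent canonical pencil, and pass to the limit using the invertibility of $A$. The only cosmetic difference is that you justify the limiting step via continuity of the roots of $\det(A_k+\la A_k^*)$ with nonvanishing leading coefficient, whereas the paper cites the continuity of eigenvalues of regular matrix pencils (Stewart--Sun); these are equivalent here precisely because $\det(A^*)\neq 0$ keeps the degree equal to $n$.
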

\begin{proof} Since
$A\in\cbunh\left(\bigoplus_{i=1}^\ell H_2(\mu_i)\oplus\bigoplus_{j=1}^{n-2\ell}\alpha_j\right)$, there is a sequence $\{A_k\}_{k\in\mathbb N}$, with $A_k\in\C^{n\times n}$, such that
\begin{itemize}
    \item[(i)] $A_k\in\bunh\left(\bigoplus_{i=1}^\ell H_2(\mu_i)\oplus\bigoplus_{j=1}^{n-2\ell}\alpha_j\right)$, and
    \item[(ii)] $\{A_k\}_{k\in\mathbb N}$ converges to $A$.
\end{itemize}
By (i), there is an invertible matrix $P_k$ such that $P_k^* A_kP_k=\bigoplus_{i=1}^\ell H_2(\mu_i^{(k)})\oplus\bigoplus_{j=1}^{n-2\ell}\alpha_j^{(k)}$, with $|\mu_i^{(k)}| >1$, for $i=1,\hdots , \ell$, and $|\alpha_j^{(k)}|=1$, for $j=1,\hdots,n-2\ell$, and for all $k\in\mathbb N$. Then, the pencil $P_k^*(A_k+\la A^*_k)P_k$ and, as a consequence, the pencil $A_k+\la A_k^*$ as well, has $-\alpha_j^{(k)}/\overline{\alpha_j^{(k)}}$ as some of its eigenvalues, for $j=1,\hdots,n-2\ell$. All these complex numbers have modulus $1$. Observe also that all the pencils  $A_k+\la A_k^*$ are regular (see \cite[Th. 2.13-(b)]{dd25}).  By (ii), the sequence of pencils $\{A_k+\la A_k^*\}_{k\in\mathbb N}$ converges to $A+\la A^*$, which is also a regular pencil because $A$ is invertible. By the continuity of eigenvalues of regular matrix pencils (see, for instance, Th. 2.1 in \cite[Ch. VI]{stewartsun}), each sequence  $\{-\alpha_j^{(k)}/\overline{\alpha_j^{(k)}}\}_{k\in\mathbb N}$ of eigenvalues of $A_k+\la A_k^*$  converges to an eigenvalue $\alpha_j$ of $A+\la A^*$, which must have modulus $1$, since all values in the sequence have modulus $1$. Note that some of these $n - 2 \ell$ sequences, say $s$ of them, can converge to the same eigenvalue, say $\alpha_j$, but in this case the eigenvalue $\alpha_j$ will have multiplicity at least $s$, so that $A+\la A^*$ will have, at least, $n-2\ell$ unit eigenvalues, counting multiplicities, as claimed.
\end{proof}

\section{Density of the generic bundles in $\C^{n\times n}$ }\label{main_sec}

In this section, we prove the first of the two main results of this work, which states that the set of $n\times n$ complex matrices is the closure of only one congruence bundle (Theorem \ref{main_th}-(a)), and, also, the union of the closures of $\lfloor n/2\rfloor +1$  $^*$congruence bundles (Theorem \ref{main_th}-(b)). Therefore, the $^\star$CFCs associated to these bundles can be considered as the generic $^\star$CFCs and we see that there is only one generic CFC, which has slightly different expressions for $n$ even or odd, and $\lfloor n/2\rfloor +1$ generic $^*$CFCs. We emphasize that Theorem \ref{main_th}-(b2) proves that all of the $\lfloor n/2\rfloor +1$ generic $^*$CFCs are necessary.

\begin{theorem}\label{main_th}
\begin{itemize}
    \item[\rm(a)]
    For $n\geq 1$, let us define the following congruence bundle
   \begin{equation}\label{mainidentityc}
       \mathcal{G} := \left\{\begin{array}{ll} \bunc\left(\bigoplus_{i=1}^{n/2} H_2(\mu_i)\right),&\mbox{if $n$ is even},\\
        \bunc\left(\bigoplus_{i=1}^{\lfloor n/2\rfloor} H_2(\mu_i)\oplus 1\right),&\mbox{if $n$ is odd},
       \end{array}\right.
   \end{equation}
   where $|\mu_i| >1$ or $\mu_i = e^{i\theta}$, with $0<\theta < \pi$, and $\mu_i\neq\mu_{i'}$, if $i\neq i'$, for all $i,i'=1,\hdots,\lfloor n/2\rfloor$. Then, $\C^{n\times n} = \overline{\mathcal{G}}$.
    \item[\rm(b)] 
    For $n\geq1$, let $\ell = 0, 1, \ldots , \lfloor n/2\rfloor$ and for each of these values $\ell$ let us define the following $^*$congruence bundle
    \begin{equation} \label{def.gen*bund}
    \mathcal{G}_\ell := \bunh\left(\bigoplus_{i=1}^\ell H_2(\mu_i)\oplus\bigoplus_{j=1}^{n-2\ell}\alpha_j\right),
    \end{equation}
     where $|\mu_i|>1$ and $\mu_i\neq\mu_{i'}$, if $i\neq i'$, for all $i,i' =1,\hdots,\ell$, and $|\alpha_j|=1$ and $\alpha_j^2 \neq \alpha_{j'}^2$, if $j\neq j'$, for all $j,j' = 1,\hdots,n-2\ell$. Then,
     \begin{itemize}
         \item[\rm (b1)] $\displaystyle \C^{n\times n}=\bigcup_{\ell=0}^{\lfloor n/2\rfloor} \overline{\mathcal{G}_\ell}$.
         \item[\rm (b2)] $\displaystyle \overline{\mathcal{G}_{\ell_1}} \cap \mathcal{G}_{\ell_2} = \emptyset$ if $0\leq \ell_1\neq\ell_2\leq \lfloor\frac{n}{2}\rfloor$.
     \end{itemize}
   In particular, all the closures of bundles in the union of the right-hand side in {\rm (b1)} are different from each other.
\end{itemize}
\end{theorem}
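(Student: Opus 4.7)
My plan has three parts, one for each of (a), (b1), and (b2). In all cases I begin by reducing $A$ to its $^\star$CFC via Lemma \ref{congruencebundle_lem}, and in (a)/(b1) treat each canonical summand separately. The main technical point is to handle the Type 0 blocks, which are not covered by Lemma \ref{decomp_lem}; in (b2) I instead count unit eigenvalues of the associated palindromic pencil.

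For part (a), the Type I and Type II blocks are placed in the closure of a bundle of small $H_2$-blocks (plus a single $\Gamma_1$ when the block has odd size) directly by Lemma \ref{decomp_lem}. For each Type 0 block $J_k(0)$, I would use the perturbation $J_k(\epsilon):=\epsilon I + J_k(0)$ for small real $\epsilon > 0$: a tridiagonal determinant computation (which reduces to the recursion for Chebyshev polynomials of the second kind) shows that $\det(J_k(\epsilon) + \lambda J_k(\epsilon)^\top)$ factors into $\lfloor k/2\rfloor$ quadratic factors of the form $\epsilon^2(1+\lambda)^2 = 4\lambda \cos^2(j\pi/(k+1))$, each giving a simple reciprocal pair of eigenvalues of modulus $\ne 1$, plus an additional factor $\epsilon(1+\lambda)$ (and thus the self-reciprocal eigenvalue $-1$) when $k$ is odd. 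By the KCF/CFC bijection of \cite[Th. 4]{d16}, this yields $J_k(0)\in\cbunc(M_k)$ with $M_k$ a direct sum of $H_2(\mu_i)$-blocks plus a $\Gamma_1$-summand when $k$ is odd. Combining all blocks via Lemma \ref{lemm.dirsumbund4} (with the parameters $\mu_i$ chosen pairwise distinct) places $A$ in $\cbunc\bigl(\bigoplus_i H_2(\mu_i) \oplus \Gamma_1^{\oplus t}\bigr)$ for some $t\equiv n\pmod 2$. To match the shape of $\mathcal{G}$ I would end with a "merging" step: an explicit calculation shows that the $\top$-palindromic pencil of $I_2 + \epsilon E_{12}$ has two distinct unit eigenvalues $e^{\pm\ii\theta}$ with $\theta\in(0,\pi)$, so $\Gamma_1 \oplus \Gamma_1 \in \cbunc(H_2(e^{\ii\theta}))$, and iterating this absorbs pairs of extra $\Gamma_1$-summands into generic Type II-(a) blocks until the count drops to $n\bmod 2$, giving $A\in\overline{\mathcal{G}}$.

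For part (b1), the argument is parallel: Lemma \ref{decomp_lem} handles Type I and Type II blocks, and the same perturbation $J_k(\epsilon)$, now with complex $\epsilon = \rho e^{\ii\phi}$, handles Type 0 blocks; the odd-$k$ case produces the self-reciprocal unit eigenvalue $-e^{2\ii\phi}$ of the $*$-palindromic pencil which by \cite[Th. 2.13-(b)]{dd25} corresponds to a Type I summand $\alpha\Gamma_1$ with $\alpha^2 = e^{2\ii\phi}$, so any target $\alpha$ is attainable by choosing $\phi$. Combining via Lemma \ref{lemm.dirsumbund2}, with the $\mu_i$ and $\alpha_j^2$ chosen pairwise distinct, places $A$ in $\overline{\mathcal{G}_\ell}$ with $\ell = (n - t_0 - t_I)/2$, where $t_0$ and $t_I$ count the odd-sized Type 0 and Type I blocks of the $^*$CFC of $A$. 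No $\alpha$-merging step is required here, because $\mathcal{G}_\ell$ already carries $n-2\ell$ free $\alpha$-summands; any coincidences among the initial $\alpha_j^2$ can be removed in the closure by a unitary perturbation $\alpha_j \mapsto \alpha_j e^{\ii\delta_j}$ with pairwise distinct $\delta_j$.

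For part (b2), suppose $B\in\overline{\mathcal{G}_{\ell_1}}\cap\mathcal{G}_{\ell_2}$ with $\ell_1\ne\ell_2$. Since $B\in\mathcal{G}_{\ell_2}$, $B$ is invertible and $B+\lambda B^*$ has exactly $n-2\ell_2$ unit eigenvalues, each simple (they are the pairwise distinct values $-\alpha_j^2$). Lemma \ref{mod1evals_lem} applied to $B\in\overline{\mathcal{G}_{\ell_1}}$ yields at least $n-2\ell_1$ unit eigenvalues counting multiplicity, so $\ell_1\ge\ell_2$. If $\ell_1 > \ell_2$, pick $B_k\in\mathcal{G}_{\ell_1}$ with $B_k\to B$; each $B_k+\lambda B_k^*$ has $\ell_1$ non-unit reciprocal pairs $\bigl(-\mu_i^{(k)}, -1/\overline{\mu_i^{(k)}}\bigr)$, and by continuity of eigenvalues of regular pencils at least $\ell_1 - \ell_2$ of them must satisfy $|\mu_i^{(k)}|\to 1$. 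But once $|\mu_\infty| = 1$ we have $-1/\overline{\mu_\infty} = -\mu_\infty$, so the two members of each such pair converge to a single unit point, producing a unit eigenvalue of $B+\lambda B^*$ of multiplicity $\ge 2$, contradicting the simplicity established above. Hence $\ell_1 = \ell_2$. The main obstacle is exactly this last step: Lemma \ref{mod1evals_lem} by itself only supplies the inequality $\ell_1\ge\ell_2$, and the collapse of reciprocal pairs on the unit circle, combined with the distinct-$\alpha_j^2$ constraint of $\mathcal{G}_{\ell_2}$, is essential to pin down equality.
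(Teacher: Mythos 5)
Your proof of part (b2) is essentially the paper's argument: Lemma~\ref{mod1evals_lem} gives $\ell_1\ge\ell_2$, and when $\ell_1>\ell_2$ the collapse of a reciprocal pair onto the unit circle contradicts the simplicity of the $n$ distinct eigenvalues of $A+\lambda A^*$ guaranteed by $A\in\mathcal{G}_{\ell_2}$. For parts (a) and (b1), however, you take a genuinely different route in one place: the handling of the Type~0 blocks $J_k(0)$. The paper sidesteps them entirely by first proving the density claim for \emph{invertible} $A$ (whose $^\star$CFC has no Type~0 summands, so Lemma~\ref{decomp_lem} suffices) and then extending to all of $\C^{n\times n}$ by noting that any non-invertible matrix is a limit of invertible ones; for (b1) this needs the additional pigeonhole observation that, the union of closures being finite, a subsequence of invertible approximants lands in a single $\overline{\mathcal{G}_\ell}$. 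Your approach instead perturbs each $J_k(0)$ directly to $J_k(\varepsilon)=\varepsilon I+J_k(0)$ and identifies the resulting $^\star$CFC via the tridiagonal (Chebyshev-type) determinant recursion for $J_k(\varepsilon)+\lambda J_k(\varepsilon)^\star$, yielding $\lfloor k/2\rfloor$ reciprocal pairs off the unit circle (and a residual $\Gamma_1$ or $\alpha\Gamma_1$ when $k$ is odd). Both routes are correct: the paper's is shorter and avoids computing anything about Type~0 blocks, while yours is more constructive and treats all three block types uniformly through Lemma~\ref{decomp_lem}-style perturbations.

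Your final ``merging'' step in (a), absorbing excess pairs $\Gamma_1\oplus\Gamma_1$ into generic $H_2(e^{\ii\theta})$ blocks via the perturbation $I_2+\varepsilon E_{12}$, is mathematically the same device as the paper's second stage, where $I_{n-2\ell}$ is first replaced (by congruence, via the cosquare criterion of \cite[Lemma~2.1]{hs06}) with a direct sum of $\begin{bsmallmatrix}0&1\\1&0\end{bsmallmatrix}$ blocks and each is then perturbed to $H_2(1+\delta_j)$. One small point you should make explicit: from $A\in\cbunc\bigl(\bigoplus_i H_2(\mu_i)\oplus\Gamma_1^{\oplus t}\bigr)$ you need the set inclusion $\bunc\bigl(\bigoplus_i H_2(\mu_i)\oplus\Gamma_1^{\oplus t}\bigr)\subset\overline{\mathcal{G}}$, not merely that the single representative $\bigoplus_i H_2(\mu_i)\oplus\Gamma_1^{\oplus t}$ lies in $\overline{\mathcal{G}}$. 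This follows, as in the paper, by writing an arbitrary $B=Q\bigl(\bigoplus_i H_2(\widetilde\mu_i)\oplus I_t\bigr)Q^\top$ in that bundle, applying your $I_2$-perturbation together with Lemma~\ref{lemm.dirsumbund4} (with freshly chosen distinct parameters) to place $\bigoplus_i H_2(\widetilde\mu_i)\oplus I_t$ in $\overline{\mathcal{G}}$, and then invoking Lemma~\ref{congruencebundle_lem}; since $\overline{\mathcal{G}}$ is closed, the desired inclusion of the bundle closure follows. With that spelled out, your argument is complete.
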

\begin{proof}
Let us first prove (b1). For this, we are going to prove that, given a matrix $A\in\C^{n\times n}$, there is some $0\leq\ell\leq\lfloor n/2\rfloor$ such that $A\in \cbunh\left(\bigoplus_{i=1}^\ell H_2(\mu_i)\oplus\bigoplus_{j=1}^{n-2\ell}\alpha_j\right)$, with $\mu_i$ and $\alpha_j$ as in the statement.

First, assume that $A$ is invertible. Then, by Theorem \ref{hs_th}-(b), $A$ is $^*$congruent to a direct sum of the form $\bigoplus_{i=1}^{r}H_{2k_i}(\la_i)\oplus\bigoplus_{j=1}^s \beta_j\Gamma_{2m_j}\oplus\bigoplus_{k=1}^t\gamma_k \Gamma_{2n_{k}+1}$, for some $\la_i,\beta_j,\gamma_k\in\C$ with $|\la_i|>1$, for all $i=1,\hdots,r$, and $|\beta_j|=1=|\gamma_k|$, for all $j=1,\hdots,s$ and $k =1,\hdots,t$, and with $2(k_1+\cdots+k_r)+2(m_1+\cdots+m_s)+2(n_1+\cdots+n_t)+t=n$. Let us set, for brevity, $\kappa:=k_1+\cdots+k_r$, $\sigma:=m_1+\cdots+m_s,$ and $\nu:=n_1+\cdots+n_t$, so that $2(\kappa+\sigma+\nu)+t=n$.

Now, by Lemma \ref{decomp_lem}, $H_{2k_i}(\la_i)\in\cbunh(\bigoplus_{i'=1}^{k_{i}}H_2(\mu_{i'}))$, $\beta_j\Gamma_{2m_j}\in\cbunh(\bigoplus_{j'=1}^{m_j}H_2(\mu'_{j'}))$, and $\gamma_k \Gamma_{2n_{k}+1}\in\cbunh(\bigoplus_{k'=1}^{n_k} H_2(\mu''_{k'})\oplus\alpha_k)$, with the properties of the parameters stated in Lemma \ref{decomp_lem}. Moreover, since the specific values of the parameters of a matrix in $^*$CFC used to generate a bundle are arbitrary, we can choose the parameters in the previous bundles to satisfy the conditions of the matrices $C_j$ in Lemma \ref{lemm.dirsumbund2}. Therefore, by Lemma \ref{lemm.dirsumbund2},
$$
\bigoplus_{i=1}^{r}H_{2k_i}(\la_i)\oplus\bigoplus_{j=1}^s \beta_j\Gamma_{2m_j}\oplus\bigoplus_{k=1}^t\gamma_k \Gamma_{2n_{k}+1}\in
\cbunh\left(\bigoplus_{i=1}^{\kappa+\sigma+\nu} H_2(\mu_i)\oplus\bigoplus_{j=1}^{t}\alpha_j \right),
$$
where $\mu_i$ and $\alpha_j$ are as in the statement. Since $n-2(\kappa+\sigma+\nu)=t$, Lemma \ref{congruencebundle_lem} and the equation above imply that $A \in \overline{\mathcal{G}_\ell}$ for $\ell = \kappa+\sigma+\nu$.

If $A$ is not invertible, there is a sequence of invertible matrices, ${\cal S}:=\{A_k\}_{k\in\mathbb N}$, which converges to $A$. Since the number of bundle closures in the right-hand side of the equality in (b1) is finite, there will be a subsequence of $\cal S$ converging to $A$ whose terms (matrices) belong to the same bundle closure. Therefore, $A$ itself belongs to this bundle closure as well. This completes the proof of (b1).

Next, we prove (b2). For this purpose, we are going to prove that if there exists a matrix $\displaystyle A \in \overline{\mathcal{G}_{\ell_1}} \cap \mathcal{G}_{\ell_2}$, then $\ell_1=\ell_2$. Note first that $\displaystyle A \in \overline{\mathcal{G}_{\ell_1}}$ implies that $A+\lambda A^*$ has at least $n-2\ell_1$ unit eigenvalues (counting multiplicities) by Lemma \ref{mod1evals_lem}, while $\displaystyle A \in \mathcal{G}_{\ell_2}$ implies that $A + \lambda A^*$ is regular and has exactly $n-2\ell_2$ unit eigenvalues, by \cite[Th. 2.13-(b)]{dd25}. Thus $\ell_1 \geq \ell_2$.

Since $A \in \overline{\mathcal{G}_{\ell_1}} = \cbunh\left(\bigoplus_{i=1}^{\ell_1} H_2(\mu_i)\oplus\bigoplus_{j=1}^{n-2\ell_1}\alpha_j\right)$, there is a sequence of invertible matrices $\{A_k\}_{k\in\mathbb N}$ such that
  \begin{itemize}
      \item[(i)] $A_k$ has $^*$CFC equal to $\left(\bigoplus_{i=1}^{\ell_1} H_2(\mu_{i,k})\oplus\bigoplus_{j=1}^{n-2\ell_1}\alpha_{j,k}\right)$, with $|\mu_{i,k}|>1$ and $\mu_{i,k}\neq\mu_{i',k}$, for $i\neq i'$, and $|\alpha_{j,k}|=1$ with $\alpha_{j,k}^2\neq \alpha_{j',k}^2$ for $j\neq j'$.

      \item[(ii)] $\{A_k\}_{k\in\mathbb N}$ converges to $A$.
  \end{itemize}
As a consequence of (ii), the sequence of regular matrix pencils $\{A_k+\la A_k^*\}_{k\in\mathbb N}$ converges to the regular pencil $A+\la A^*$.

Using again \cite[Th. 2.13-(b)]{dd25}, we get that the eigenvalues of the pencil $A_k+\la A_k^*$ are $-\mu_{i,k}, -1/\overline\mu_{i,k}$ and $-\alpha_{j,k}^2$, for $i=1,\hdots,\ell_1$ and $j=1,\hdots, n-2\ell_1$, i.e., $A_k+\la A_k^*$ has $2 \ell_1$ eigenvalues of modulus different from $1$ and $n-2\ell_1$ eigenvalues of modulus $1$. Similarly, since $A \in \mathcal{G}_{\ell_2}$, its $^*$CFC is $\left(\bigoplus_{i=1}^{\ell_2} H_2(\widetilde \mu_i)\oplus\bigoplus_{j=1}^{n-2\ell_2} \widetilde \alpha_j\right)$, with the restrictions in the parameters as in \eqref{def.gen*bund}, so $A+\la A^*$ has $2 \ell_2$ eigenvalues $-\widetilde \mu_{i}, -1/\overline{\widetilde \mu_{i}}$ of modulus different from $1$ and $n-2\ell_2$ eigenvalues $-\widetilde{\alpha}_j^2$ of modulus $1$. All these $n$ eigenvalues are distinct.
If $\ell_1>\ell_2$ (namely, $n-2\ell_2>n-2\ell_1$), by the continuity of eigenvalues of regular matrix pencils (see, for instance, Th. 2.1 in \cite[Ch. VI]{stewartsun}), there must be some sequence $\{-\mu_{i,k}\}_{k\in\mathbb N}$ or $\{-1/\overline\mu_{i,k}\}_{k\in\mathbb N}$ which converges to some $-\widetilde{\alpha}_j^2$. Assume, without loss of generality, that $\{-\mu_{1,k}\}_{k\in\mathbb N}$ converges to $-\widetilde{\alpha}_1^2$. Then $\{-1/\overline\mu_{1,k}\}_{k\in\mathbb N}$ converges to $-1/\overline{\widetilde{\alpha}_1^2} =-\widetilde{\alpha}_1^2$, since $|\widetilde{\alpha}_1^2|=1$. As a consequence, the eigenvalue $-\widetilde{\alpha}_1^2$ appears twice in $A+\la A^*$, which is a contradiction because the eigenvalues of this pencil are all distinct. Therefore, $\ell_1= \ell_2$, as wanted.

\medskip

Finally, we prove (a). Using again the fact that any non-invertible matrix is the limit of a sequence of invertible matrices, it is enough to prove that any invertible matrix $A\in \mathbb{C}^{n\times n}$ is in the closure of the bundle defined in \eqref{mainidentityc}. The proof has two parts. In the first one, we proceed as in the proof of (b1), replacing $^*$congruence by congruence. Thus, we start by assuming that $A$ is invertible. Then, by Theorem \ref{hs_th}-(a), $A$ is congruent to a direct sum of the form  $\bigoplus_{i=1}^{r}H_{2k_i}(\la_i)\oplus\bigoplus_{j=1}^s \Gamma_{2m_j}\oplus\bigoplus_{k=1}^t \Gamma_{2n_{k}+1}$, for some $\la_i \in\C$ such that $0\ne \lambda_i \ne (-1)^{k_i + 1}$, for all $i=1,\hdots,r$, and with $2(k_1+\cdots+k_r)+2(m_1+\cdots+m_s)+2(n_1+\cdots+n_t)+t=n$. Let us set, as before, $\kappa:=k_1+\cdots+k_r$, $\sigma:=m_1+\cdots+m_s,$ and $\nu:=n_1+\cdots+n_t$, so that $2(\kappa+\sigma+\nu)+t=n$. Then, using Lemmas \ref{decomp_lem}, \ref{lemm.dirsumbund4} and \ref{congruencebundle_lem}, and mimicking the arguments in the proof of (b1), we end up with
\begin{equation} \label{eq.1(a)}
A\in\cbunc\left(\bigoplus_{i=1}^{\ell} H_2(\mu_i)\oplus I_{n-2\ell}\right),
\end{equation}
for $\ell = \kappa + \sigma + \nu$, and where $|\mu_i| >1$ or $\mu_i = e^{i\theta}$, with $0<\theta < \pi$, and $\mu_i\neq\mu_{i'}$, if $i\neq i'$, for all $i,i'=1,\hdots,\ell$.

In the second part of the proof of (a), we will prove that
\begin{equation} \label{eq.2(a)even}
\bunc\left(\bigoplus_{i=1}^{\ell} H_2(\mu_i)\oplus I_{n-2\ell}\right)   \subseteq
\cbunc\left(\bigoplus_{i=1}^{n/2} H_2(\mu_i) \right), \quad \mbox{if $n$ is even},
\end{equation}
and that
\begin{equation} \label{eq.2(a)odd}
\bunc\left(\bigoplus_{i=1}^{\ell} H_2(\mu_i)\oplus I_{n-2\ell}\right)   \subseteq
\cbunc\left(\bigoplus_{i=1}^{\lfloor n/2 \rfloor} H_2(\mu_i) \oplus 1\right), \quad \mbox{if $n$ is odd},
\end{equation}
where in both cases the parameters $\mu_i$ satisfy the conditions in the statement. These inclusions combined with \eqref{eq.1(a)} imply that any invertible $n\times n$ matrix is in the closure of the bundle defined in \eqref{mainidentityc}. To prove \eqref{eq.2(a)even} and \eqref{eq.2(a)odd}, observe that
\begin{align*}
I_{n-2\ell} \quad \mbox{is congruent to} \quad S:= \overbrace{\begin{bmatrix}
        0&1\\1&0
\end{bmatrix}\oplus\cdots\oplus\begin{bmatrix}
        0&1\\1&0
\end{bmatrix}}^{n/2-\ell} ,   \; \quad \mbox{if $n$ is even}, \\
I_{n-2\ell} \quad \mbox{is congruent to} \quad
S:= \overbrace{\begin{bmatrix}
        0&1\\1&0
\end{bmatrix}\oplus\cdots\oplus\begin{bmatrix}
        0&1\\1&0
\end{bmatrix}}^{\lfloor n/2 \rfloor -\ell}\oplus 1,  \; \quad \mbox{if $n$ is odd}.
\end{align*}
This follows from \cite[Lemma 2.1]{hs06} because the cosquare of $S$, that is, $S^{-\top} S$, is  $S^{-\top} S = I_{n-2\ell}$. Therefore, if $C\in \bunc\left(\bigoplus_{i=1}^{\ell} H_2(\mu_i)\oplus I_{n-2\ell}\right)$, then there is an invertible $P \in \mathbb{C}^{n\times n}$ such that
$$
C = \left\{
\begin{array}{l} P \left(
 \bigoplus_{i=1}^{\ell} H_2(\widetilde \mu_i) \oplus    \overbrace{\begin{bmatrix}
        0&1\\1&0
\end{bmatrix}\oplus\cdots\oplus\begin{bmatrix}
        0&1\\1&0
\end{bmatrix}}^{n/2-\ell} \right) P^\top ,   \; \quad \mbox{if $n$ is even},\\[10mm]
P \left( \bigoplus_{i=1}^{\ell} H_2(\widetilde \mu_i)   \oplus
\overbrace{\begin{bmatrix}
        0&1\\1&0
\end{bmatrix}\oplus\cdots\oplus\begin{bmatrix}
        0&1\\1&0
\end{bmatrix}}^{\lfloor n/2 \rfloor -\ell} \oplus 1 \right) P^\top,  \; \quad \mbox{if $n$ is odd},
\end{array}
\right.
$$
where the parameters $\widetilde \mu_i$ are all distinct and satisfy the conditions in the statement. Observe that $C = \lim_{k\rightarrow \infty} C_k$, where the sequence of matrices $\{C_k\}_{k \in \mathbb{N}}$ is defined as follows
$$
C_k := \left\{
\begin{array}{l} P \left(
 \bigoplus_{i=1}^{\ell} H_2(\widetilde \mu_i) \oplus    \overbrace{\begin{bmatrix}
        0&1\\1+ \frac{1}{k}&0
\end{bmatrix}\oplus\cdots\oplus\begin{bmatrix}
        0&1\\1+\frac{1}{(n/2-\ell)k}&0
\end{bmatrix}}^{n/2-\ell} \right) P^\top ,   \; \quad \mbox{if $n$ is even},\\[10mm]
P \left( \bigoplus_{i=1}^{\ell} H_2(\widetilde \mu_i)   \oplus
\overbrace{\begin{bmatrix}
        0&1\\1+ \frac{1}{k} &0
\end{bmatrix}\oplus\cdots\oplus\begin{bmatrix}
        0&1\\1+\frac{1}{(\lfloor n/2 \rfloor -\ell)k}&0
\end{bmatrix}}^{\lfloor n/2 \rfloor -\ell}\oplus 1 \right) P^\top,  \; \quad \mbox{if $n$ is odd}.
\end{array}
\right.
$$
For all $k\geq k_0$, where $k_0$ is sufficiently large, $C_k$ belongs to $\bunc\left(\bigoplus_{i=1}^{n/2} H_2(\mu_i) \right)$, if $n$ is even, or to $\bunc\left(\bigoplus_{i=1}^{\lfloor n/2 \rfloor} H_2(\mu_i) \oplus 1\right)$, if $n$ is odd. Hence, $C$ belongs to $\cbunc\left(\bigoplus_{i=1}^{n/2} H_2(\mu_i) \right)$, if $n$ is even, or to $\cbunc\left(\bigoplus_{i=1}^{\lfloor n/2 \rfloor} H_2(\mu_i) \oplus 1\right)$, if $n$ is odd, which proves \eqref{eq.2(a)even} and  \eqref{eq.2(a)odd}.
\end{proof}

\begin{remark} \label{rem.codimension} {\rm (Codimensions of the generic bundles) Using Theorem 2 and the formula (56) in \cite{dd10}, it can be seen that the congruence bundle in \eqref{mainidentityc} is the unique congruence bundle with codimension $0$. Analogously, using \cite[Theorem 3.3]{dd11} and the formula for the real codimension of $^*$congruence bundles at the bottom of \cite[p. 462]{dd11}, it can seen that the $\lfloor n/2 \rfloor + 1$ bundles defined in \eqref{def.gen*bund} are the unique $^*$congruence bundles with real codimension zero.
}
\end{remark}

\section{Openness of the generic bundles in $\C^{n\times n}$ }\label{sec.openness}
As announced in the introduction, this section is devoted to prove that the bundles introduced in Theorem \ref{main_th} are open subsets of $\mathbb{C}^{n \times n}$. Thus, combining Theorems \ref{main_th} and \ref{th.opennness}, we get that the unique congruence bundle introduced in \eqref{mainidentityc} (with slightly different expressions for $n$ even or odd) is an open and dense subset of $\mathbb{C}^{n \times n}$ and, so, it can be properly termed as a {\em generic} subset of $\mathbb{C}^{n \times n}$, according to the standard definition in topology. Analogously, from Theorems \ref{main_th} and \ref{th.opennness}, we get that the union, $\bigcup_{\ell=0}^{\lfloor n/2\rfloor} \mathcal{G}_\ell$, of the $^*$congruence bundles defined in \eqref{def.gen*bund} is an open and dense subset of $\mathbb{C}^{n \times n}$, and, so, generic. In Theorem \ref{th.opennness}, we will use the Frobenius norm of a matrix \cite[p. 321]{hj13}, denoted by $\|A\|_F$, to be specific, but any other norm can be used.

\begin{theorem} \label{th.opennness}
\begin{itemize}
    \item[\rm(a)] The set $\mathcal{G}$ defined in \eqref{mainidentityc} is an open subset of $\mathbb{C}^{n \times n}$. That is, for any $A\in \mathcal{G}$, there exists a number $\varepsilon >0$ such that
   $$
   \mathcal{U}_\varepsilon (A) := \{ S \in \mathbb{C}^{n \times n} \, : \,  \|A-S\|_F < \varepsilon\} \subset \mathcal{G} .
   $$

    \item[\rm(b)]  The subset  $\mathcal{G}_\ell$ defined in \eqref{def.gen*bund} is an open subset of $\mathbb{C}^{n \times n}$ for any $\ell = 0,1, \ldots , \lfloor n/2 \rfloor$. That is, for any $A\in \mathcal{G}_\ell$, there exists a number $\varepsilon >0$ such that
   $$
   \mathcal{U}_\varepsilon (A) := \{ S \in \mathbb{C}^{n \times n} \, : \,  \|A-S\|_F < \varepsilon\} \subset \mathcal{G}_\ell .
   $$

\end{itemize}
\end{theorem}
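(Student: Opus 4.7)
The plan is to reformulate membership in $\mathcal{G}$ and $\mathcal{G}_\ell$ as purely spectral conditions on the $\star$-palindromic pencil $P_A(\lambda) := A + \lambda A^\star$, and then check that those conditions are open in the entries of $A$. Via the bijection between the CFC of $A$ and the KCF of $A+\lambda A^\top$ from \cite[Th. 4]{d16}, and the analogous relation for $^*$congruence from \cite[Th. 2.13-(b)]{dd25}, one expects the following characterizations. In case (a), $A\in\mathcal{G}$ if and only if $A$ is invertible, $P_A(\lambda)$ has $n$ pairwise distinct simple finite eigenvalues, $\det(A+A^\top)\neq 0$, and (only when $n$ is even) also $\det(A-A^\top)\neq 0$. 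In case (b), $A\in\mathcal{G}_\ell$ if and only if $A$ is invertible, $P_A(\lambda)$ has $n$ pairwise distinct simple eigenvalues, and exactly $n-2\ell$ of them lie on the unit circle.

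In case (a), each condition in the characterization is a polynomial non-equality in the entries of $A$: $\det A \neq 0$, $\det(A\pm A^\top)\neq 0$, and the discriminant of $\det(A+\lambda A^\top) \in \C[\lambda]$ is nonzero. All of these define open subsets of $\C^{n\times n}$, and $\mathcal{G}$ is their intersection, hence open.

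In case (b), invertibility and the distinct-simple-spectrum condition are open by the same reasoning. The delicate point is openness of the condition that exactly $n-2\ell$ eigenvalues of $P_A(\lambda)$ lie on the unit circle. Here I would combine continuity of eigenvalues of regular matrix pencils (\cite[Ch. VI, Th. 2.1]{stewartsun}) with a short palindromic-symmetry argument. For $\widetilde A$ sufficiently close to $A$, the pencil $P_{\widetilde A}(\lambda)$ is still regular with $n$ simple eigenvalues, each lying close to a unique eigenvalue of $P_A(\lambda)$. Eigenvalues of $P_A(\lambda)$ with modulus different from $1$ stay off the unit circle by continuity of the modulus. For an original simple eigenvalue $\lambda_0$ with $|\lambda_0|=1$, the unique nearby eigenvalue $\lambda_0'$ of $P_{\widetilde A}(\lambda)$ must satisfy $\lambda_0'=1/\overline{\lambda_0'}$: the point $1/\overline{\lambda_0'}$ is also an eigenvalue of the $*$-palindromic pencil $P_{\widetilde A}(\lambda)$ and lies close to $1/\overline{\lambda_0}=\lambda_0$, so uniqueness forces them to coincide, giving $|\lambda_0'|=1$. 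Consequently the number of unit eigenvalues is locally constant, and $\mathcal{G}_\ell$ is open.

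The main obstacle I anticipate is rigorously establishing the case-(b) characterization: the KCF-CFC correspondence under $^*$congruence identifies Type I parameters only up to sign (this is the reason why the set $\mathcal{S}_{\Gamma,2}^{*,neg}$ enters Definition \ref{def.auxsets}(d) and the two separate inequality constraints appear in Definition \ref{bundle*_def}), so some care is needed to argue that the inequality constraints built into the bundle really correspond to having $n-2\ell$ \emph{distinct} unit eigenvalues plus $\ell$ distinct reciprocal-conjugate pairs. Once that translation is in place, openness follows directly from the eigenvalue-continuity plus palindromic-pinning argument above.
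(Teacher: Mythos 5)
Your argument is correct, but it reaches the conclusion by a genuinely different route than the paper, most visibly in part (b). The paper's proof of (b) has two stages: continuity of eigenvalues of regular pencils (\cite[Ch. VI, Th. 2.1]{stewartsun}) together with \cite[Th. 2.13-(b)]{dd25} shows that a small neighborhood of $A\in\mathcal{G}_\ell$ lies in $\bigcup_{k}\mathcal{G}_k$, and the index is then pinned down by contradiction using the disjointness statement of Theorem \ref{main_th}-(b2) (which in turn rests on Lemma \ref{mod1evals_lem}); so the paper's openness proof depends on the density theorem. You instead pin the index locally: a simple unimodular eigenvalue of a $*$-palindromic pencil cannot leave the unit circle under small perturbation, because $1/\overline{\lambda_0'}$ is again an eigenvalue lying in the same small disk and simplicity forces $\lambda_0'=1/\overline{\lambda_0'}$, while eigenvalues off the circle stay off; hence the number of unit eigenvalues is locally constant. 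This makes Theorem \ref{th.opennness}(b) self-contained and independent of Theorem \ref{main_th}, which is a real structural difference; the paper's route buys brevity once (b2) is available. For (a), your discriminant observation (intersect $\det A\neq 0$ with nonvanishing of the discriminant of $\det(A+\lambda A^\top)$) differs from the paper's continuity argument and actually yields Zariski-openness; note that the conditions $\det(A\pm A^\top)\neq 0$ are redundant, since any eigenvalue $\pm 1$ of a $\top$-palindromic pencil is necessarily multiple. The one step to tighten is the characterization you flag yourself: you need the converse directions, namely that invertibility plus $n$ distinct simple eigenvalues (with exactly $n-2\ell$ on the unit circle, resp.\ the parity bookkeeping in the congruence case) forces the $^\star$CFC to consist only of $1\times 1$ Type I and $2\times 2$ Type II blocks with the right counts, hence membership in $\mathcal{G}_\ell$, resp.\ $\mathcal{G}$. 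This follows from \cite[Th. 4]{d16} and \cite[Th. 2.13-(b)]{dd25} because every other canonical block produces either a Jordan block of size at least $2$ or a repeated eigenvalue in the KCF, and the sign ambiguity of the Type I parameters is immaterial since the constraints in Definition \ref{bundle*_def} involve only $\alpha_j^2$; the paper uses the same correspondence at the same level of detail, so this is a presentational point rather than a gap.
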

\begin{proof}
We prove first in detail part (b), which is longer. Then, we sketch the proof of part (a).

{\em Proof of} (b). The proof has two parts. In the first part, we prove that if $A\in \mathcal{G}_\ell$ and $\varepsilon >0$ is sufficiently small, then
$\mathcal{U}_\varepsilon (A) \subset \bigcup_{k=0}^{\lfloor n/2\rfloor} \mathcal{G}_k$. With this result at hand, in the second part we prove that $\mathcal{U}_\varepsilon (A) \subset \mathcal{G}_\ell$ for some $\varepsilon >0$. If $A \in \mathcal{G}_\ell$, then $A$ is invertible, the pencil $A + \lambda A^*$ is regular and, from \cite[Theorem 2.13-(b)]{dd25}, this pencil has $n$ distinct eigenvalues, all different from zero and infinite, such that $\ell$ of them have modulus larger than $1$, $\ell$ of them have modulus smaller than $1$, and $n-2 \ell$ have modulus equal to $1$. Thus, for any $\varepsilon >0$ sufficiently small, if $S \in \mathcal{U}_\varepsilon (A)$, then $S$ is invertible, the pencil $S + \lambda S^*$ is regular, and it has $n$ distinct eigenvalues, all different from zero and infinite, by the continuity of the eigenvalues of regular pencils (Th. 2.1 in \cite[Ch. VI]{stewartsun}). Combining this with the relationship between the KCF of $S + \lambda S^*$ and the $^*$CFC of $S$ \cite[Theorem 2.13-(b)]{dd25}, one gets that $S \in \mathcal{G}_k$, for some $k = 0,1, \ldots , \lfloor n/2 \rfloor$. So, $\mathcal{U}_\varepsilon (A) \subset \bigcup_{k=0}^{\lfloor n/2\rfloor} \mathcal{G}_k$ for any $\varepsilon >0$ sufficiently small.

For the second part of the proof, that is, to prove that there exists an $\varepsilon >0$ such that  $\mathcal{U}_\varepsilon (A) \subset \mathcal{G}_\ell$, we proceed by contradiction and assume that $\mathcal{U}_\varepsilon (A) \not\subset \mathcal{G}_\ell$ for any $\varepsilon >0$. This and the result above imply that  $\mathcal{U}_\varepsilon (A) \cap \bigcup_{k=0, k\ne \ell}^{\lfloor n/2\rfloor} \mathcal{G}_k \ne \emptyset$ for any $\varepsilon >0$ sufficiently small. Therefore, $A \in \overline{\bigcup_{k=0, k\ne \ell}^{\lfloor n/2\rfloor} \mathcal{G}_k} = \bigcup_{k=0, k\ne \ell}^{\lfloor n/2\rfloor} \overline{\mathcal{G}_k}$, which is in contradiction with Theorem \ref{main_th}-(b2) because $A \in \mathcal{G}_\ell$.

\medskip

{\em Proof of} (a). It is completely analogous to the first part of the proof of (b), with the only difference that \cite[Th. 4]{d16} has to be used to relate the CFC of $A$ with the KCF of the pencil $A + \lambda A^\top$. Note that a second part of the proof is not needed in this case because there is only one generic congruence bundle $\mathcal{G}$. We omit the details for brevity.
\end{proof}

\section{Numerical experiments} \label{sec.numerexp}
We have used several times in this paper the fact that, for any matrix $A\in \mathbb{C}^{n\times n}$, there is a bijection between the CFC of $A$ and the KCF of the $\top$-palindromic pencil $A + \lambda A^\top$, which can be found in \cite[Th. 4]{d16}, and a very close connection between the $^*$CFC of $A$ and the KCF of the $*$-palindromic pencil $A + \lambda A^*$, which can be found in \cite[Th. 2.13-(b)]{dd25}. In particular, this connection implies that if $A$ belongs to some of the generic $^*$congruence bundles defined in \eqref{def.gen*bund}, i.e., $A\in \mathcal{G}_\ell$ for some $\ell = 0, 1, \ldots , \lfloor n/2 \rfloor$, then $A + \lambda A^*$ has exactly $n-2\ell$ eigenvalues of modulus $1$. This fact may seem surprising at  first glance, because the value $1$ of the modulus seems very particular. At first glance, one might think that eigenvalues of modulus $1$ should not appear very often in $*$-palindromic pencils or, if they do, they should appear in a very small number. The main purpose of this section is to show through some numerical experiments that eigenvalues of modulus $1$ arise very often in random $*$-palindromic pencils and, in fact, in any of the numbers predicted by Theorem \ref{main_th}-(b). In the last part of the section, we briefly discuss numerical experiments with random $\top$-palindromic pencils, which show that no eigenvalues of modulus $1$ are encountered for even size, and only one (the eigenvalue $-1$) for odd size. This is also in agreement with Theorem \ref{main_th}-(a).

In the first numerical experiment, we have obtained the number of unit eigenvalues in $m$ random $*$-palindromic pencils with size $n\times n$ with the following {\sc matlab} code:

\begin{verbatim}
function norm1(n,m)
% counts the number of eigenvalues of modulus 1 in m random *-palindromic pencils
% n is the size of the pencil
% m is the number of pencils
counter=zeros(m,1);
for i=1:m
    a=rand(n)+sqrt(-1)*rand(n);
    e=eig(a,-a');
    normi=abs(e);
    for j=1:n
        if abs(normi(j)-1)/norm(a)<=10^(-14)
            counter(i)=counter(i)+1;
        else
            counter(i)=counter(i);
        end
    end
end
plot(counter,'o')

x = unique(counter);
N = numel(x);
count = zeros(N,1);
for k = 1:N
   count(k) = sum(counter==x(k));
end
\end{verbatim}

The results obtained for $n=24$ and $25,$ and $m=10^4$ are displayed in Table \ref{rand(n)_table}. They are partially in accordance with Theorem \ref{main_th}-(b). In particular, the number of unit eigenvalues is odd when the size $n$ is odd, and it is even when $n$ is even. However, not all possible numbers of unit eigenvalues predicted by Theorem \ref{main_th}-(b) show up. In particular, for $n=25$ only pencils with $1,3,5,7,9,$ and $11$ unit eigenvalues arise (namely pencils with $13,15,17,19,21,23,$ and $25$ unit eigenvalues are missing). Something similar happens with $n=24$, for which only pencils with $0,2,4,6,8,$ and $10$ unit eigenvalues show up. The obtained results are rather similar if {\tt randn(n)} is used instead of {\tt rand(n)}. Informally speaking, this indicates that the generic $^*$congruence bundles in \eqref{def.gen*bund} giving rise to higher number of unit eigenvalues are not located in the space $\mathbb{C}^{n\times n}$ in the same ``probable'' regions of the random $*$-palindromic pencils that have been generated. Therefore, we replace {\tt a} in the first line of the outer ``for" loop of the {\tt function norm1(n,m)} by
\begin{equation}\label{matrixa}
{\tt a=randi(m,n)+i*log(i)*eye(n)/5+sqrt(-1)*(randi(m,n)+i*log(i)*eye(n)/5)},
\end{equation}
for {\tt i=1:m}, where {\tt m} is the number of pencils in the experiment. In this case, we get the results displayed in Table \ref{25-10000_table}, which are in accordance with Theorem \ref{main_th}-(b). Observe that all possible generic bundles appear as an output. As {\tt i} increases, the number of unit eigenvalues seems to increase, as can be seen in Figure \ref{plot.figure}. This resembles very much the results obtained in \cite[\S7]{ddd24} for the number of real eigenvalues of Hermitian pencils. Taking into account that the eigenvalues of matrix pencils are generically simple, we may infer that the pencil {\tt (a,-a')} has $k$ unit eigenvalues, so the KCF of {\tt (a,-a')} is $\bigoplus_{i=1}^k(\la - \alpha_i) \oplus \bigoplus_{i=1}^{(n-k)/2}\begin{bsmallmatrix}
    \la-\mu_i & 0 \\ 0 & \la-(\overline\mu_i)^{-1} \end{bsmallmatrix}$, for some $\mu_i\in\C$ with $|\mu_i|>1$ and $\mu_i\neq\mu_{i'}$,  $\beta_i\in\C$ with $|\beta_i| = 1$ and $\beta_i\neq\beta_{i'}$ for $i\neq i'$, where $\beta_i$ are the unit eigenvalues of {\tt (a,-a')}. Then, the $^*$CFC of the matrix {\tt a} is the one in \eqref{def.gen*bund} for $n-2\ell=k$ and with $\alpha_i = \pm \sqrt{\beta_i}$ (see \cite[Th. 2.13-(b)]{dd25}).

\begin{table}[h]
    \centering
    \begin{tabular}{|c|c||c|c|}\hline
    \multicolumn{2}{|c|}{$n=25,\ m=10^4$}&\multicolumn{2}{|c|}{$n=24,\ m=10^4$}\\\hline\hline
         Number of e-vals&Frequency &Number of e-vals&Frequency \\\hline
         1&780&0&92\\
         3&3558&2&2067\\
         5&3954&4&4459\\
         7&1483&6&2785\\
         9&215&8&552\\
         11&10&10&45\\\hline
    \end{tabular}
    \caption{Number of unit eigenvalues in $m$ random $n\times n$ $*$-palindromic pencils of the type {\tt a=rand(n)+sqrt(-1)*rand(n)}, with their frequency.}
    \label{rand(n)_table}
\end{table}

\begin{table}[]
    \centering
    \begin{tabular}{|c|c||c|c|}\hline
    \multicolumn{2}{|c|}{$n=25,\ m=10^4$}&\multicolumn{2}{|c|}{$n=24,\ m=10^4$}\\\hline\hline
         Number of e-vals&Frequency &Number of e-vals&Frequency \\\hline
         1&37&0&6\\
         3&395&2&162\\
         5&1031&4&806\\
         7&1086&6&1146\\
         9&886&8&965\\
         11&805&10&828\\13&819&12&865\\15&799&14&807\\17&810&16&834\\19&863&18&878\\21&968&20&983\\23&1087&22&1099\\25&414&24&621\\\hline
    \end{tabular}
    \caption{Number of unit eigenvalues in $m$ $*$-palindromic $n\times n$ pencils $A+\la A^*$ with $A$ as in \eqref{matrixa}, and their frequency.}
    \label{25-10000_table}
\end{table}

\begin{figure}[H]

\centering
\subfloat[Size $25\times 25$]{\label{fig:capparatus}
\centering
\includegraphics[width=0.6\linewidth]{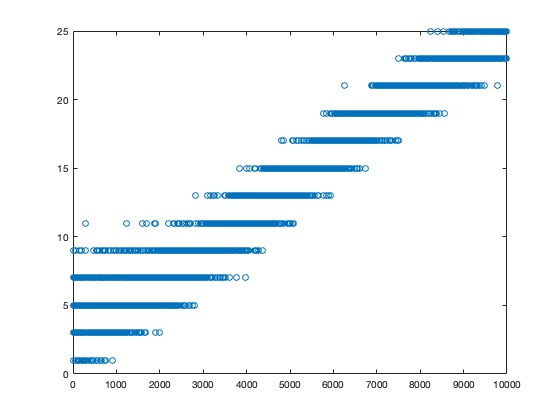}
}
\hfill
\subfloat[Size $24\times24$]{\label{fig:cdiagram}
\centering

\includegraphics[width=0.6\linewidth]{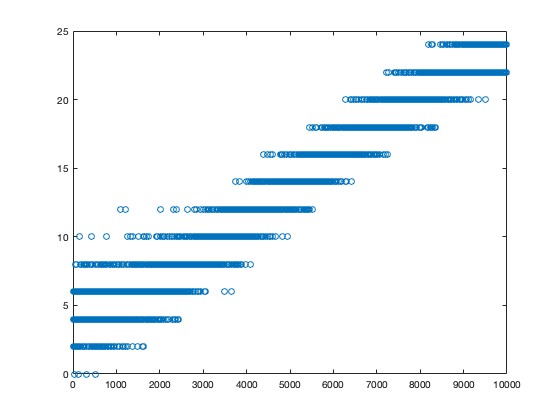}
}

\caption{Number of unit eigenvalues of pencils $A+\la A^*$, with $A$ as in \eqref{matrixa}, for {\tt n=25} (top) and {\tt n=24} (bottom), {\tt m=10000}, and {\tt i=1:10000} (horizontal axis).}
\label{plot.figure}
\end{figure}

Finally, if we run the code {\tt norm1(n,m)} replacing {\tt e=eig(a,-a')} by {\tt e=eig(a,-a.')}, so that the pencil {\tt (a,-a.')} is $\top$-palindromic, then, for {\tt n=24} and {\tt m=10000} we get that all $10000$ pencils have no unit eigenvalues, whereas for {\tt n=25} all $10000$ pencils have just one unit eigenvalue, as expected.

\bigskip

\noindent{\bf Acknowledgments}. This work has been partially supported by grants PID2023-147366NB-I00 funded by MICIU/AEI/10.13039/501100011033 and FEDER/UE, and RED2022-134176-T.

\bibliographystyle{plain}

\end{document}